\documentclass[10pt]{article}
\usepackage{amsfonts}
\usepackage{amsthm}
\usepackage{amssymb}
\usepackage{graphicx, enumerate}
\usepackage{amsmath}
\usepackage{mathtools}
\usepackage{latexsym}
\usepackage{longtable}
\usepackage{tabularx}
\usepackage{amsmath}
\usepackage{amsfonts}
\usepackage{amsthm}
\usepackage{setspace}
\usepackage{graphicx}
\usepackage{float}
\usepackage{rotating}
\usepackage{tikz}
\usepackage{verbatim}
\usepackage{caption}
\usepackage{subcaption}

\usepackage{mathtools}
\usepackage{amsmath,amssymb,amsbsy}
\usepackage{graphicx}
\usepackage{multirow}
\usepackage{amsfonts}
\usepackage{amsthm}
\usepackage{amssymb}
\usepackage{graphicx, enumerate}
\usepackage{amsmath}
\usepackage{latexsym}
\usepackage{longtable}
\usepackage{tabularx}
\usepackage{amsmath}
\usepackage{amsfonts}
\usepackage{amsthm}
\usepackage{setspace}
\usepackage{graphicx}
\usepackage{float}
\usepackage{rotating}
\usepackage{tikz}
\usepackage{verbatim}
\usepackage{soul}
\usepackage{xcolor}

\usepackage[normalem]{ulem}

\usepackage[main=english,czech,slovak]{babel}

\definecolor{darkpastelgreen}{rgb}{0.01, 0.75, 0.24}
\definecolor{blue-violet}{rgb}{0.54, 0.17, 0.89}

\usepackage{soul}
\usepackage{ulem}
\newtheorem{theorem}{Theorem}[section]

\newtheorem{prop}[theorem]{Proposition}
\newtheorem{obs}[theorem]{Observation}

\theoremstyle{definition}
\newtheorem{definition}[theorem]{Definition}

\theoremstyle{definition}
\newtheorem{const}[theorem]{Construction}
\newtheorem*{prb*}{Open Problem}
\newtheorem{prb}{Open Problem}
\newtheorem{exm}[theorem]{Example}

\def\Spec{\mathrm{Spec}}

\def\tilmu{\tilde{\mu}}
\vspace{5cm}
\begin{document}

\textwidth4.5true in
\textheight7.2true in

\title
{\Large \sc \bf {Semi-magic dihedral squares}}
\date{}
\author{{{Sylwia Cichacz$^{1}$, Dalibor Froncek$^{1,{2}}$}}\\
\normalsize $^1$AGH University of Krakow, Poland, cichacz@agh.edu.pl\\
\normalsize $^2$University of Minnesota Duluth, U.S.A.,  dalibor@d.umn.edu}
\maketitle

\begin{abstract}
Let $\Gamma$ be a group of order $n^2$ and $SMS_{\Gamma}(n)=(a_{i,j})_{n\times n}$ be an $n\times n$ array whose entries are all distinct elements of $\Gamma$. If there exists an element $\mu\in\Gamma$ such that for every row $i$, there exists an ordering of elements such that 
	$$
		a_{i,j_1} a_{i,j_2} \dots a_{i,j_{n-1}} a_{i,j_n} = \mu
	$$
	and for every column $j$ there exists an ordering of elements such that 
	$$
		a_{i_1,j} a_{i_2,j} \dots a_{i_{m-1},j} a_{i_m,j} = \mu,
	$$
	then $SMS_{\Gamma}(n)$ is called a \emph{$\Gamma$-semi-magic square of side $n$} and $\mu$ is called a \emph{magic constant}.

We provide a complete characterization of semi-magic squares of side $n$ whose entries belong to a dihedral group $D_k$. Moreover, we show that in our constructions a single semi-magic square may admit two distinct magic constants, depending on the order in which the products are computed.
\end{abstract}

\noindent
\textbf{Keywords:}  Magic squares, magic rectangles, dihedral group

\noindent
\textbf{2000 Mathematics Subject Classification:} 05B{15}

%\newpage
\section{Introduction}\label{sec:intro}

Magic squares are one of the oldest mathematical structures, reportedly dating back to the 4-th century BCE. A \emph{magic square} of order $n$ is an $n\times n$ array with entries $1,2,\dots, n^2$ such that the sum of each row, column, and the main forward and backward diagonal is the same \emph{magic constant} $c=n(n^2+1)/2$. When we only require the row and column sum to be equal and disregard the diagonals, we speak about a \emph{semi-magic square.} For an overview, see, e.g., Chapter 6 Section 33 in~\cite{handbook}. A complete characterization is well known.

\begin{theorem}[\cite{handbook}] \label{thm:msquare}
	There exists a  {magic square} of side $n$ if and only if $n>2$.
\end{theorem}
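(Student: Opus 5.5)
The plan splits the claim into the trivial non-existence part ($n\le 2$) and the existence part (every $n\ge 3$). The existence part is handled separately for odd $n$, for $n\equiv 0 \pmod 4$, and for $n\equiv 2\pmod 4$.

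\emph{Non-existence.} For $n=1$ the single entry $1$ must equal $n(n^2+1)/2=1$. Since the statement of Theorem~\ref{thm:msquare} excludes $n=1$, I would adopt the convention that a magic square is nontrivial ($n\ge 2$), or else treat $n=1$ as degenerate. For $n=2$ the magic constant is $5$. The two rows $(a,b)$ and $(c,d)$ give $a+b=5$ and $c+d=5$. The column condition gives $a+c=5$, so $b=c$, contradicting distinctness of entries.

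\emph{Odd $n$.} I would use the classical Siamese (de la Loub\`ere) construction. Equivalently, I would write the entry in cell $(i,j)$ as $1+nL_1(i,j)+L_2(i,j)$, where
\[
L_1(i,j)\equiv i+2j \pmod n,\qquad L_2(i,j)\equiv 2i+j \pmod n .
\]
Each $L_k$ is a Latin square. The pair is orthogonal, since the determinant $1\cdot1-2\cdot2=-3$ is invertible mod $n$ when $3\nmid n$. Hence every row and column sum equals $n\cdot\frac{n(n-1)}{2}+\frac{n(n-1)}{2}+n=\frac{n(n^2+1)}{2}$. Both diagonals also sum correctly, because along each diagonal the linear forms $i+2j$, $2i+j$, $i-2j$, $2i-j$ take every residue once; this uses that $3$ and $1$ are units mod $n$. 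When $3\mid n$ this approach breaks, so I would simply invoke the Siamese method instead, which works for all odd $n$ by direct verification of the step pattern.

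\emph{Doubly even $n=4m$.} Start from the natural filling $a_{i,j}=(i-1)n+j$. Then replace $a_{i,j}$ by $n^2+1-a_{i,j}$ on the cells of the $4\times 4$ blocks lying on the block diagonals. Each row and column contains exactly half complemented cells, arranged in complementary pairs, and the diagonals are fully complemented. A direct sum computation gives the constant.

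\emph{Singly even $n=4m+2$.} This is the main obstacle. I would use Strachey's (LUX) method. Take an odd magic square of order $2m+1$ from the previous case and blow each cell up into a $2\times 2$ block filled with $4k-3,\dots,4k$ according to the L, U or X patterns. These are placed as $m+1$ rows of L's, one row of U's and $m-1$ rows of X's, with the middle L and the U below it swapped. Checking the row, column and diagonal sums requires careful bookkeeping of the pattern contributions. Alternatively, the paper only cites~\cite{handbook}, so it is acceptable to defer to that reference for this case.
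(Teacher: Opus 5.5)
The paper does not prove this statement. It quotes it from the Handbook and uses it only to produce the numerical square $\tilde{M}(m)$, $m\ge 3$. In fact only the row and column sums of $\tilde{M}(m)$ enter the power squares, together with the fact that its entries are $1,\dots,m^2$. So for the paper's purposes a semi-magic square would already suffice.

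Your outline reconstructs the classical proof behind the citation and is correct. What it buys over the citation is self-containment. However, the steps you defer to ``direct verification'' and ``bookkeeping'' are where the actual work lies. You are also right that the statement is literally false for $n=1$ and must be read for $n\ge 2$.

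Here is how to close those deferred steps.

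\emph{Odd $n$.} You need not split off the case $3\mid n$, because the Siamese square is itself linear with offsets. Its entry in row $R$, column $C$ ($R,C\in\{0,\dots,n-1\}$) is
\[
1+n\big((R+C-\tfrac{n-1}{2})\bmod n\big)+\big((R+2C+1)\bmod n\big),
\]
which is exactly the square $\tilde{M}(5)$ used in the paper. The map $(R,C)\mapsto(R+C,\,R+2C)$ has determinant $1$, so the two digits are orthogonal for every odd $n$. On the anti-diagonal the high digit is constantly $\tfrac{n-1}{2}$. On the main diagonal the low digit is $3R+1\bmod n$; for $3\mid n$ it takes each of $1,4,\dots,n-2$ exactly three times and still sums to $\tfrac{n(n-1)}{2}$. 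Some such balancing is unavoidable when $3\mid n$. For unit coefficients $\alpha,\beta$, one of $\alpha\pm\beta$ is then divisible by $3$, so no linear form hits every residue on both diagonals.

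\emph{Doubly even $n$.} The complemented cells are those on the two diagonals of each $4\times 4$ block. Add that this pattern is invariant under $(i,j)\mapsto(n+1-i,n+1-j)$ and that $n^2+1-a_{i,j}=a_{n+1-i,n+1-j}$. This is what keeps the entries a permutation of $1,\dots,n^2$.

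\emph{Singly even $n=4m+2$.} The L--U--X method is Conway's. Strachey's method is a different construction: four odd magic squares in the quadrants, followed by exchanges of cells between quadrants. The L--U--X verification is short. Take L, U, X with rows $(4,1),(2,3)$; $(1,4),(2,3)$; $(1,4),(3,2)$.
\begin{enumerate}
\item Every block adds $5$ to each of its two rows.
\item The (left, right) column contributions are $(6,4)$, $(3,7)$, $(4,6)$. Hence $m+1$ L's, one U and $m-1$ X's give $10m+5$ in both columns.
\item The main-diagonal contributions are $7,4,3$ and the anti-diagonal contributions are $3,6,7$. After the swap, each diagonal of the pattern array carries $m$ L's, two U's and $m-1$ X's, again giving $10m+5$. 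Without the swap the main diagonal would give $10m+8$.
\end{enumerate}
The coarse part $4(A_{p,q}-1)$ contributes the same amount to every line. Hence any magic square $A=(A_{p,q})$ of order $2m+1\ge 3$ can serve as the base.
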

A generalization of a magic square is an $m\times n$ \emph{magic rectangle} with entries $1,2,\dots,mn$ where all row sums are equal to the \emph{row constant} $r=n(mn+1)/2$ all column sums are equal to the \emph{column constant} $c=m(mn+1)/2$. A semi-magic square is then an $n\times n$ magic rectangle. Magic squares and rectangles can be generalized in many different ways. For instance, we may require that the entries are  elements of an Abelian group (see~\cite{CicJaco,CicFro,Cichacz-Hinc-2,Evans,Pellegrini,Sun-Yihui,ref_YuFengLiu}). 
%\edtS
In contrast, the case of non-Abelian groups has so far been considered only in~\cite{Froncek_dih_SMS_n=0mod4}. Namely, the second author recently tackled the problem of dihedral groups and constructed $D_{2n^2}$-semimagic rectangles of side $n$ over dihedral groups $D_{2n^2}$ of order $4n^2$ for any $n\equiv0\pmod4$~\cite{Froncek_dih_SMS_n=0mod4}. 
%\edtS
In this paper, we extend this result to all admissible values of $n$, using simpler construction methods. Furthermore, we show that in our constructions, a given semi-magic square may admit two distinct magic constants, depending on the order in which the products are taken.

\vskip6pt
\noindent
\textbf{Disclaimer} \textit{Some parts of this and the following section may be similar or identical to corresponding parts of paper~\cite{Froncek_dih_SMS_n=0mod4} by the second author on similar topic.}

%\newpage
\section{Definitions and necessary conditions}\label{sec:definitions}

It is common to refer to an $n\times n$ magic square as a square of \emph{order $n$}. To avoid confusion between the order of the group $\Gamma$ and the square, we reserve the word ``order'' to the group and speak about magic rectangle of \emph{side $n$}.
{We use the notation $[a,b]$ for the set $\{a,a+1,\dots,b\}$ of consecutive integers. When $a=0$, we may use just $[b]$.}

\begin{definition}\label{def:gamma-magic-rect}
	Let $\Gamma$ be a group of order $mn$ and $MR_{\Gamma}(m,n)= (a_{i,j})$ an $m\times n$ array whose entries are all elements of $\Gamma$. If for every row $i$ there exists an ordering of elements such that 
	\begin{equation*}%\label{key}
		a_{i,j_1} a_{i,j_2} \dots a_{i,j_{n-1}} a_{i,j_n} = \rho
	\end{equation*}
	and for every column $j$ there exists an ordering of elements such that 
	\begin{equation*}%\label{key}
		a_{i_1,j} a_{i_2,j} \dots a_{i_{m-1},j} a_{i_m,j} = \sigma,
	\end{equation*}
	then $MR_{\Gamma}(m,n)$ is called \emph{$\Gamma$-magic rectangle.}
%%%%%%%%%%%%%%%%%%%%%%%%%%%
	If for every row and column the ordering is
	\begin{equation*}%\label{key}
		a_{i,1} a_{i,2} \dots a_{i,{n-1}} a_{i,n} = \rho
	\end{equation*}
	and  
	\begin{equation*}%\label{key}
		a_{m,j} a_{m-1,j} \dots a_{{2},j} a_{1,j} = \sigma,
	\end{equation*}
	then $MR_{\Gamma}(m,n)$ is \emph{linearly $\Gamma$-magic.} 
	If for every row $i$ there exists $j$ (not necessarily the same for all $i$) such that
	\begin{equation*}%\label{key}
		a_{i,j+1} a_{i,j+2} 
		\dots
		a_{i,n} a_{i,1} 		
		\dots 
		a_{i,j-1} a_{i,j} = \rho
	\end{equation*}
	and for every column $j$ there exists $i$ such that
	\begin{equation*}%\label{key}
		a_{i-1,j} a_{i-2,j} 
		\dots
		a_{m,j} a_{1,j} 				
		\dots 
		a_{i+1,j} a_{i,j} = \sigma,
	\end{equation*}
	then we call the ordering \emph{circular} and $MR_{\Gamma}(m,n)$ \emph{circularly $\Gamma$-magic}. 
%%%%%%%%%%%%%%%%%%%%%%%%%%%
Finally, if $m=2s,n=2t$ and for every row $i$ there exists $j\in[1,t]$ and $j'\in[t+1,n]$ (not necessarily the same for all $i$) such that
\begin{equation*}%\label{key}
	a_{i,j+1} a_{i,j+2} 
	\dots
	a_{i,s} a_{i,1} 		
	\dots 
	a_{i,j-1} a_{i,j} = \rho_1
\end{equation*}
and
\begin{equation*}%\label{key}
	a_{i,j'+1} a_{i,'+2} 
	\dots
	a_{i,m} a_{i,s+1} 		
	\dots 
	a_{i,j'-1} a_{i,j'} = \rho_2
\end{equation*}
and for every column $j$ there exists $i\in[1,s]$ and $i'\in[s+1,m]$ such that
\begin{equation*}%\label{key}
	a_{i-1,j} a_{i-2,j} 
	\dots
	a_{1,j} a_{n,j} 				
	\dots 
	a_{i+1,j} a_{i,j} = \sigma_1,
\end{equation*}
and
\begin{equation*}%\label{key}
	a_{i'-1,j} a_{i'-2,j} 
	\dots
	a_{1,j} a_{n,j} 				
	\dots 
	a_{i'+1,j} a_{i',j} = \sigma_2,
\end{equation*}
then we call the ordering \emph{semi-circular} and $MR_{\Gamma}(m,n)$ \emph{semi-circularly $\Gamma$-magic}. 
%%%%%%%%%%%%%%%%%%%%%%%%%%%
\end{definition}

\begin{definition}\label{def:gamma-magic-square}
	Let $\Gamma$ be a group of order $n^2$ and $MR_{\Gamma}(n,n)$ a {$\Gamma$-magic rectangle.} If the row and column products are equal, that is, $\rho=\sigma$, then we call $MR_{\Gamma}(n,n)$ a \emph{$\Gamma$-semi-magic square} and if moreover the products of both the main and backward main diagonals are equal to $\rho=\sigma$, then $MR_{\Gamma}(n,n)$ is called a \emph{$\Gamma$-magic square}. 
	
	The notions of  \emph{linearly, circularly and semi-circularly $\Gamma$-magic} and \emph{$\Gamma$-semi-magic} squares are defined as in Definition~\ref{def:gamma-magic-rect}.
\end{definition}	
	
Observe that in the case where $\Gamma$ is an Abelian group, the order of elements in each product is irrelevant, as the group operation is commutative. Consequently, the conditions on the orderings in the definitions above are automatically satisfied.

The dihedral group $D_{k}$ of order $2k$ (sometimes also denoted by $D_{2k}$) is the group consisting of $k$ rotations $r_i$ and $k$ reflections $s_i$, where the rotations form a cyclic group of order $k$ and and each reflection generates a subgroup of order 2. More formal definition is below.

\begin{definition}\label{def:dihedral}
	The \emph{dihedral group}  $D_{k}$ of order $2k$ where $k\geq3$ is defined on the set of elements $\{r_0,r_1,\dots,r_{k-1},s_0,s_1,\dots,s_{k-1}\}$ where $r_0=e, r_1=r,  r_i=r^i$, $s_0=s$, $s_i=r^i s$, $s^2_i=e$ and $r^is=sr^{-i}$ for $i=0,1,\dots,k-1$.	The elements $r_i$ are called \emph{rotations}, and $s_i$ are called \emph{reflections}.
\end{definition}

An important property of $D_{k}$ will be used in our constructions. If follows directly from the definition.

\begin{prop}\label{prop:sr^is}
	In any dihedral group $D_{k}$, we have $sr^is=r^{-i}$ for every $i=0,1,\dots,k-1$.
\end{prop}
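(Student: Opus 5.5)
The identity $sr^is=r^{-i}$ follows from the single defining relation $r^is=sr^{-i}$ together with $s^2=e$, both given in Definition~\ref{def:dihedral}. The plan is to rewrite the left-hand factor $r^is$ and then cancel the two copies of $s$.

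First, fix $i\in\{0,1,\dots,k-1\}$. By Definition~\ref{def:dihedral} we have $r^is=sr^{-i}$. Multiplying this relation on the left by $s$ gives
$$sr^is = s(sr^{-i}) = s^2r^{-i}.$$
Since $s=s_0$ and $s_0^2=e$, this reduces to $r^{-i}$, as claimed.

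As a consistency check, the edge case $i=0$ gives $ss=e=r^0$, which agrees with the formula. Exponents are read modulo $k$, since $r$ has order $k$, so $r^{-i}=r^{k-i}=r_{k-i}$ is again one of the listed rotations.

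There is no genuine obstacle here. The only point needing care is to apply the relation $r^is=sr^{-i}$ exactly as written, with $s$ moving from the right of $r^i$ to the left while the exponent changes sign, and to use associativity in the regrouping $s(sr^{-i})=(ss)r^{-i}$.
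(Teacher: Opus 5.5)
Your proof is correct and is essentially the paper's approach: the paper gives no written proof and says only that the identity follows directly from Definition~\ref{def:dihedral}. Your argument supplies exactly that step, left-multiplying the relation $r^is=sr^{-i}$ by $s$ and using $s^2=e$.
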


When we have a $\Gamma$-magic or semi-magic square where $\Gamma=D_k$, the dihedral group on $2k$ elements, we speak about a \emph{dihedral magic or semi-magic square}.

We begin by stating the obvious necessary conditions. Since every dihedral group is of even order, a magic square $MS_{D_k}$ of odd order cannot exist. The following necessary condition follows from straightforward observations made in~\cite{Froncek_dih_SMS_n=0mod4}.

\begin{theorem}[\cite{Froncek_dih_SMS_n=0mod4}]\label{thm:necessary}
	If a dihedral semi-magic square $SMS_{D_k}(n)$ exists, then both $n$ and $k$ must be even.
\end{theorem}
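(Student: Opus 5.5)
The plan is a pure counting argument on the order of the group; no property of the row or column products is needed. Assume a dihedral semi-magic square $SMS_{D_k}(n)$ exists. By Definition~\ref{def:gamma-magic-square}, the $n^2$ entries of the square are all the elements of $D_k$, each used exactly once. By Definition~\ref{def:dihedral}, $D_k$ has order $2k$. So the first step is to record the identity
\begin{equation*}
n^2 = 2k .
\end{equation*}

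The second step is to get the parity of $n$. Since $n^2=2k$ is even and $n$ is an integer, $n$ must be even; if $n$ were odd, $n^2$ would be odd. This is the observation already mentioned before the statement: squares of odd side cannot be filled with the elements of a group of even order.

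The third step is to get the parity of $k$. Write $n=2t$. Then $4t^2 = 2k$, so $k = 2t^2$, which is even. This proves both conditions of Theorem~\ref{thm:necessary}.

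I do not expect a genuine obstacle here. The only point needing care is that the definition really forces every element of $D_k$ to appear exactly once, so that $|D_k| = n^2$ holds, rather than merely that the entries are distinct elements. I will also note, as a by-product, that the standing assumption $k\ge 3$ in Definition~\ref{def:dihedral} excludes $n=2$. Indeed $n=2$ would give $k=2$, so any admissible $n$ satisfies $n\ge 4$, and then $k=n^2/2\ge 8$.
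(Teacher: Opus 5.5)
Your proof is correct and matches what the paper relies on: the paper cites the result as a straightforward observation (the group order $2k$ is even), and since the definition forces $n^2=2k$, $n$ is even and then $k=n^2/2$ is even. Your side remark is also valid: under the paper's standing assumption $k\ge 3$ for $D_k$, the case $n=2$ cannot occur.
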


%\input{30_related_results_03a}
%\input{40_necessary_02a}

%\newpage
\section{Preliminaries}\label{sec:prel} 

By Theorem~\ref{thm:necessary}, the necessary condition for the existence of $SMS_{D_k}({n})$  is that $k$ and $n$ are even, thus $n=4m^2$ and $k=2m^2$ for some positive integer $m>2$.  	

Theorem~\ref{thm:msquare} implies the existence  a magic square $\tilde{M}(m)$ of side $m>2$ with the magic constant $\tilmu$. Using the  square $\tilde{M}(m)=(\tilde{m}_{i,j})_{m\times m}$ we build three \emph{power squares} of size $m\times m$: $E(m)$, $O(m)$ and $T(m)$ as follows. We will later use them to construct $SMS_{D_{2m^2}}(2m)$.

\begin{definition}\label{def:power squares}
	Let $\tilde{M}(m)$ be a magic square with entries $\tilde{m}_{i,j}$ and magic constant $\tilmu$. Define \emph{power squares} $E(m), O(m)$ and $T(m)$ with entries $e_{i,j}, o_{i,j}$ and $t_{i,j}$, respectively, for 
	$i,j\in[0,m-1]$
	as follows:
	\begin{itemize}
		\item $e_{i,j}=2\tilde{m}_{i,j} \pmod{2m^2}$
		\item $o_{i,j}=2\tilde{m}_{i,j}{+1} \pmod{2m^2}$
		\item $t_{i,j}=-2\tilde{m}_{i,j}+1 \pmod{2m^2}$ when $m$ is even
		\item $t_{i,j}=-2\tilde{m}_{i,j}+m-2 \pmod{2m^2}$ when $m$ is odd
	\end{itemize}
	We notice that in both cases $t_{i,j}$ is odd.
\end{definition}

	The sums in rows and columns of the squares are constant, as can be easily verified. We calculate them again modulo $2m^2$.
	
\begin{obs}\label{obs:power square sum}
	The row and column sums in the power squares $E(m), O(m),$ and $T(m)$ are
	\begin{itemize}
	\item $\tilde{e}=\sum_{j=0}^{m-1}e_{i,j}=\sum_{i=0}^{m-1}{2\tilde{m}}_{i,j}=2\tilmu\pmod{2m^2}$
	\item $\tilde{o}=\sum_{j=0}^{m-1}{o}_{i,j}=\sum_{i=0}^{m-1}{(2\tilde{m}_{i,j}+1)}=(2\tilmu+m)\pmod{2m^2}$
	%	\item $\tilde{t}=\sum_{j=0}^{m-1}e_{i,j}=\sum_{i=0}^{m-1}e_{i,j}=(-2\tilmu+mh)\pmod{2m^2}$
	\item $\tilde{t}=\sum_{j=0}^{m-1}{t}_{i,j}=\sum_{i=0}^{m-1}{(-2\tilde{m}_{i,j}+1)}=(-2\tilmu+m)\pmod{2m^2}$ for $m$ even
	\item $\tilde{t}=\sum_{j=0}^{m-1}{t}_{i,j}=\sum_{i=0}^{m-1}{(-2\tilde{m}_{i,j}+m-2)}=(-2\tilmu+m^2-2m)\pmod{2m^2}$ for $m$ odd
	\end{itemize}
\end{obs}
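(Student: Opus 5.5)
The statement to prove is Observation~\ref{obs:power square sum}. It reduces to the magic property of $\tilde{M}(m)$ together with linearity of summation. The sums are taken in $\mathbb{Z}_{2m^2}$, and reduction modulo $2m^2$ is a ring homomorphism $\mathbb{Z}\to\mathbb{Z}_{2m^2}$. So I may compute each row or column sum over the integers, using the integer representatives $2\tilde{m}_{i,j}$, $2\tilde{m}_{i,j}+1$, and so on, and reduce only at the end.

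The common step is the following. Fix a row $i$ (columns are handled identically, with the roles of the indices swapped). Every entry of the power square has the form $a\tilde{m}_{i,j}+b$ for fixed integers $a,b$, so the row sum is
\[
\sum_{j=0}^{m-1}(a\tilde{m}_{i,j}+b)=a\sum_{j=0}^{m-1}\tilde{m}_{i,j}+mb=a\tilmu+mb .
\]
The second equality holds because every row sum of $\tilde{M}(m)$ equals $\tilmu$ (Theorem~\ref{thm:msquare} and the definition of a magic square). The same identity holds for every column, since $\tilde{M}(m)$ has constant column sums as well. The result therefore does not depend on $i$ or $j$.

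Now I substitute the four choices of $(a,b)$.
\begin{itemize}
\item $E(m)$: $(a,b)=(2,0)$ gives $2\tilmu$.
\item $O(m)$: $(a,b)=(2,1)$ gives $2\tilmu+m$.
\item $T(m)$, $m$ even: $(a,b)=(-2,1)$ gives $-2\tilmu+m$.
\item $T(m)$, $m$ odd: $(a,b)=(-2,m-2)$ gives $-2\tilmu+m(m-2)=-2\tilmu+m^2-2m$.
\end{itemize}
Reducing modulo $2m^2$ gives exactly the four constants listed in the statement.

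There is no real obstacle. The only point needing care is that the entries are defined as residues modulo $2m^2$, while the sums are computed from integer representatives. This is justified because the statement itself asserts the sums only modulo $2m^2$, and summing the residues agrees with summing the representatives modulo $2m^2$.
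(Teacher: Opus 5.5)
Your proof is correct and follows the paper's approach. The paper only remarks that the sums are \emph{easily verified}, and the intended verification is your linearity computation: an entry $a\tilde{m}_{i,j}+b$ gives every row and column sum $a\tilde{\mu}+mb$ by the magic property of $\tilde{M}(m)$, reduced modulo $2m^2$; your four choices of $(a,b)$ reproduce the stated constants, and your remark on summing residues versus integer representatives settles the only point needing care.
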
	
% ending blue

%%%%%%%%%%%%%%%%%%%%%%%%%%%%%%%%%%%%%%%%%
%%%%%%%%%%%%%%%%%%%%%%%%%%%%%%%%%%%%%%%%%
%%%%%%%%%%%%%%%%%%%%%%%%%%%%%%%%%%%%%%%%%
%%%%%%%%%%%%%%%%%%%%%%%%%%%%%%%%%%%%%%%%%

\begin{exm}\label{exm:E(4)}
	In Figure~\ref{fig:E(4)} we show the construction of power squares $E(4)$, $O(4)$ and $T(4)$.
\end{exm}

\begin{figure}[H]
\begin{subfigure}{1\linewidth}
$$
\begin{array}{|c|c|c|c|}
\hline
16 & 2  & 3  & 13 \\ \hline
5  & 11 & 10 & 8  \\ \hline
9  & 7  & 6  & 12 \\ \hline
4  & 14 & 15 & 1  \\ \hline
\end{array}
$$
\caption{$\tilde{M}(4)$, $\tilde{\mu}=34$}
\end{subfigure}\\
\hfill
\begin{subfigure}{0.33\linewidth}
$$
\begin{array}{|c|c|c|c|}
\hline
0 & 4  & 6  & 26 \\ \hline
10  & 22 & 20 & 16  \\ \hline
18  & 14  & 12  & 24 \\ \hline
8  & 28 & 30 & 2  \\ \hline
\end{array}
$$
\caption{ $E(4)$, $\tilde{e}=4$}
\end{subfigure}
\begin{subfigure}{0.33\linewidth}
$$
\begin{array}{|c|c|c|c|c|c|}
\hline
1 & 5  & 7  & 27 \\ \hline
11  & 23 & 21 & 17  \\ \hline
19  & 15  & 13  & 25 \\ \hline
9  & 29 & 31 & 3  \\ \hline
\end{array}
$$
\caption{ $O(4)$, $\tilde{o}=8$}
\end{subfigure}
\hfill
\begin{subfigure}{0.32\linewidth}
$$
\begin{array}{|c|c|c|c|c|c|}
\hline
1 & 29 & 27 & 7 \\ \hline
23 & 11 & 13 & 17 \\ \hline
15 & 19 & 21 & 9 \\ \hline
25 & 5 & 3 & 31 \\ \hline
\end{array}
$$
\caption{$T(4)$, $\tilde{t}=0$}
\end{subfigure}

\caption{Example of $E(4)$, $O(4)$ and $T(4)$}\label{fig:E(4)}
\end{figure}

\begin{exm}\label{exm:E(5)}
	In Figure~\ref{fig:E(5)} we show the construction of power squares $E(5)$, $O(5)$ and $T(5)$.
\end{exm}

\begin{figure}[H]
\begin{subfigure}{1\linewidth}
$$
\begin{array}{|c|c|c|c|c|}
\hline
17 & 24 & 1  & 8  & 15 \\\hline
23 & 5  & 7  & 14 & 16 \\\hline
4  & 6  & 13 & 20 & 22 \\\hline
10 & 12 & 19 & 21 & 3  \\\hline
11 & 18 & 25 & 2  & 9 \\\hline
\end{array}
$$
\caption{$\tilde{M}(5)$, $\tilde{\mu}=65$}
\end{subfigure}\\
\hfill
\begin{subfigure}{0.33\linewidth}
$$
\begin{array}{|c|c|c|c|c|}
\hline
34 & 48 & 2  & 16 & 30 \\\hline
46 & 10 & 14 & 28 & 32 \\\hline
8  & 12 & 26 & 40 & 44 \\\hline
20 & 24 & 38 & 42 & 6  \\\hline
22 & 36 & 0  & 4  & 18\\ \hline
\end{array}
$$
\caption{ $E(5)$, $\tilde{e}=30$}
\end{subfigure}
\begin{subfigure}{0.33\linewidth}
$$
\begin{array}{|c|c|c|c|c|c|}
\hline
35 & 49 & 3  & 17 & 31 \\\hline
47 & 11 & 15 & 29 & 33 \\\hline
9  & 13 & 27 & 41 & 45 \\\hline
21 & 25 & 39 & 43 & 7  \\\hline
23 & 37 & 1  & 5  & 19\\ \hline
\end{array}
$$
\caption{ $O(5)$, $\tilde{o}={35}$}
\end{subfigure}
\hfill
\begin{subfigure}{0.32\linewidth}
$$
\begin{array}{|c|c|c|c|c|c|}
\hline
19 & 5  & 1  & 37 & 23 \\ \hline
7  & 43 & 39 & 25 & 21 \\ \hline
45 & 41 & 27 & 13 & 9  \\ \hline
33 & 29 & 15 & 11 & 47 \\ \hline
31 & 17 & 3  & 49 & 35 \\ \hline
\end{array}
$$
\caption{$T(5)$, $\tilde{t}=35$}
\end{subfigure}

\caption{Example of $E(5)$, $O(5)$ and $T(5)$}\label{fig:E(5)}
\end{figure}

\newpage
Rather than using the group elements and entries and performing the group operation, we simply use the exponents in elements $r_i=r^i$ and $s_i=r^is$. Then, when in our square we would perform the product (read from right to left) $\dots r^i r^j\dots$, we instead just add the exponents $\dots i+j\dots$. For reflections, we recall that from Proposition~\ref{prop:sr^is} it follows that 
$$
	(r^is)(r^js) = r^i(s r^j s) = r^i r^{-j}
$$
and we use just the term $i-j$.

We  will build $MS_{\Gamma}({n})$ from partial squares $Q^{uv}({m})=(q^{uv}_{i,j})_{m\times m}$ for $u,v\in\{1,2\}$. Because the partial squares are always identified by the superscript $uv$, we believe that no confusion will arise when we drop the square sides and write simply  $Q^{uv}$ instead of $Q^{uv}({m})$.

The row sums in row $i$ will be  $\rho^{uv}_{i}$ in $Q^{uv}$ and it is calculated according to the order: 
 $$\rho^{uv}_{i}=q^{uv}_{i,i}q^{uv}_{i,i+1}\ldots q^{uv}_{i,m-1}q^{uv}_{i,0}q^{uv}_{i,1}\ldots q^{uv}_{i,i-1},$$
where the subscripts are taken modulo $m$.

{In other words, we always start with the entry just before the square diagonal and proceed to the left cyclically until we stop at the diagonal.}
 
The column sums in column $j$ will be  $\sigma^{uv}_{j}$ 
and it is calculated according to the order: 
 $$\sigma^{uv}_{j}=q^{uv}_{j,j}q^{uv}_{j-1,j}\ldots q^{uv}_{0,j} q^{uv}_{m-1,j}q^{uv}_{{m-2},j}\ldots q^{uv}_{j+1,j}.$$

Similarly as in the row product, we always start with the entry below the square diagonal and proceed down cyclically until we arrive at the diagonal.

%%%%%%%%%%%%%%%%%%%%%%%%%%%%%%%%%%%%%%%%%%%%%%%%%%%%%%%%%%%%%%%%%
%%%%%%%%%%%%%%%%%%%%%%%%%%%%%%%%%%%%%%%%%%%%%%%%%%%%%%%%%%%%%%%%%
%%%%%%%%%%%%%%%%%%%%%%%%%%%%%%%%%%%%%%%%%%%%%%%%%%%%%%%%%%%%%%%%%
%%%%%%%%%%%%%%%%%%%%%%%%%%%%%%%%%%%%%%%%%%%%%%%%%%%%%%%%%%%%%%%%%
%%%%%%%%%%%%%%%%%%%%%%%%%%%%%%%%%%%%%%%%%%%%%%%%%%%%%%%%%%%%%%%%%
%%%%%%%%%%%%%%%%%%%%%%%%%%%%%%%%%%%%%%%%%%%%%%%%%%%%%%%%%%%%%%%%%
%%%%%%%%%%%%%%%%%%%%%%%%%%%%%%%%%%%%%%%%%%%%%%%%%%%%%%%%%%%%%%%%%
%%%%%%%%%%%%%%%%%%%%%%%%%%%%%%%%%%%%%%%%%%%%%%%%%%%%%%%%%%%%%%%%%
%%%%%%%%%%%%%%%%%%%%%%%%%%%%%%%%%%%%%%%%%%%%%%%%%%%%%%%%%%%%%%%%%

%\newpage
\section{Construction for $m$ even}\label{sec:even}

We present a general construction for $k\geq8$, using the partial squares $Q^{uv}$. {The squares $Q^{11}$ and $Q^{22}$ will consist of rotations and $Q^{12}$ and $Q^{21}$ of reflections.}

 %The column sums in column $j$ will be  $\sigma^{uv}_{j}$ and it is calculated according to the order: 
% $$\sigma^{uv}_{j}=q^{uv}_{j,j}q^{uv}_{j+1,j}\ldots q^{uv}_{m-1,j}q^{uv}_{0,j}q^{uv}_{1,j}\ldots q^{uv}_{j-1,j}.$$

\begin{const}\label{const:mevenrot}\emph{Rotations}
	
	We construct first the squares $Q^{11}$ and $Q^{22}$. Let  $E(m)=(e_{i,j})_{m\times m}$ and $O(m)=(o_{i,j})_{m\times m}$ be the power squares defined in Section~\ref{sec:prel}. 	Let 
    $$q^{11}_{i,j}=\begin{cases}r^{e_{i,j}},& i+j \;\mathrm{even},\\
    r^{o_{i,j}},& i+j \;\mathrm{odd},\\\end{cases}\;\;\;\;q^{22}_{i,j}=\begin{cases}r^{e_{i,j+1}},& i+j \;\mathrm{even},\\
    r^{o_{i,j+1}},& i+j \;\mathrm{odd}.\\\end{cases}$$
     Observe that for   any $i,j\in[0,m-1]$ we have 
\begin{align*}
	{e_{i,l}}+{o_{i,l+1}}
		&={2\tilde{m}_{i,l}}+{2\tilde{m}_{i,l+1}}+1,\\
	{o_{l-1,j}}+e_{l,j}
		&={2\tilde{m}_{l-1,j}}+1+2\tilde{m}_{l,j},
\end{align*} 
where the subscripts are taken modulo $m$.  Therefore 
 \begin{alignat*}{2}
	\rho^{11}_{i}
		&=r^{e_{i,i}}r^{o_{i,i+1}}r^{e_{i,i+2}}r^{o_{i,i+3}}\ldots r^{e_{i,i-2}}r^{o_{i,i-1}}
		&&=r^{2\tilde{\mu}+m/2},\\
\sigma^{11}_{j}
		&=r^{e_{j,j}}r^{o_{j-1,j}}r^{e_{j-2,j}}r^{o_{j-3,j}}\ldots r^{e_{j+2,j}}r^{o_{j+1,j}}
		&&=r^{2\tilde{\mu}+m/2},\\
\rho^{22}_{i}
		&=r^{e_{i,i+1}}r^{o_{i,i+2}}r^{e_{i,i+3}}r^{o_{i,i+4}}\ldots r^{e_{i,i-1}}r^{o_{i,i}}
		&&=r^{2\tilde{\mu}+m/2},\\
\sigma^{22}_{j}
		&=r^{e_{j-1,j}}r^{o_{j-2,j}}r^{e_{j-3,j}}r^{o_{j-4,j}}\ldots r^{e_{j+1,j}}r^{o_{j,j}}
		&&=r^{2\tilde{\mu}+m/2}.
\end{alignat*}
\end{const}

We show an example for $m=4$ in Figure~\ref{fig:Q^ss(4)}.

\begin{figure}[H]
\begin{center}

\begin{subfigure}{1\linewidth}
$$
\begin{array}{|c|c|c|c|}
\hline
r^{e_{1,1}} 	& r^{o_{1,2}} 	& r^{e_{1,3}}  	& r^{o_{1,4}} \\ \hline
r^{o_{2,1}} 	& r^{e_{2,2}} 	& r^{o_{2,3}}  	& r^{e_{2,4}} \\ \hline
r^{e_{3,1}} 	& r^{o_{3,2}} 	& r^{e_{3,3}}  	& r^{o_{3,4}} \\ \hline
r^{o_{4,1}} 	& r^{e_{4,2}} 	& r^{o_{4,3}}  	& r^{e_{4,4}} \\ \hline
\end{array}=
\begin{array}{|c|c|c|c|}
\hline
r^{0} 	& r^{5}  	& r^{6}  	& r^{27} \\ \hline
r^{11} 	& r^{22}  	& r^{21}  	& r^{16} \\ \hline
r^{18} 	& r^{15}  	& r^{12}  	& r^{25} \\ \hline
r^{9} 	& r^{28}  	& r^{31}  	& r^{2} \\ \hline
\end{array}
$$
\caption{$Q^{11}(4)$}
\end{subfigure}\\
\begin{subfigure}{1\linewidth}
$$
\begin{array}{|c|c|c|c|c|c|}
\hline
r^{e_{1,2}} 	& r^{o_{1,3}} 	& r^{e_{1,4}}  	& r^{o_{1,1}} \\ \hline
r^{o_{2,2}} 	& r^{e_{2,3}} 	& r^{o_{2,4}}  	& r^{e_{2,1}} \\ \hline
r^{e_{3,2}} 	& r^{o_{3,3}} 	& r^{e_{3,4}}  	& r^{o_{3,1}} \\ \hline
r^{o_{4,2}} 	& r^{e_{4,3}} 	& r^{o_{4,4}}  	& r^{e_{4,1}} \\ \hline
\end{array}=
\begin{array}{|c|c|c|c|c|c|}
\hline
r^{4} 	& r^{7}  	& r^{26}  	& r^{1} \\ \hline
r^{23} 	& r^{20}  	& r^{17}  	& r^{10} \\ \hline
r^{14} 	& r^{13}  	& r^{24}  	& r^{19} \\ \hline
r^{29} 	& r^{30}  	& r^{3}  	& r^{8} \\ \hline
\end{array}
$$
\caption{$Q^{22}(4)$}
\end{subfigure}

\end{center}

\caption{Squares $Q^{11}$ and $Q^{22}$}\label{fig:Q^ss(4)}
\end{figure}   
   
\begin{const}\label{const:mevenref}\emph{Reflections}

We construct now the squares $Q^{12}$ and $Q^{21}$. Let  $E(m)=(e_{i,j})_{m\times m}$, $O(m)=(o_{i,j})_{m\times m}$ and $T(m)=(t_{i,j})_{m\times m}$ be the power squares defined in Section~\ref{sec:prel}. 
	Let 
    $$q^{12}_{i,j}=\begin{cases}r^{e_{i,j}}s,& i+j \;\mathrm{even},\\
    r^{t_{i,j}}s,& i+j \;\mathrm{odd},\\\end{cases}\;\;\;\;q^{21}_{i,j}=\begin{cases}r^{e_{i,j+1}}s,& i+j \;\mathrm{even},\\
    r^{t_{i,j+1}}s,& i+j \;\mathrm{odd}.\\\end{cases}$$
   
    Observe that for   any $i,j,l\in[0,m-1]$ we have 
\begin{align*}
	{e_{i,l}}-{t_{i,l+1}}
		&={2\tilde{m}_{i,l}}-(-{2\tilde{m}_{i,l+1}}+1)\\
		&={2\tilde{m}_{i,l}}+{2\tilde{m}_{i,l+1}}-1,\\
	{e_{l,j}}-t_{l-1,j}
		&={2\tilde{m}_{l,j}}-(-2\tilde{m}_{l-1,j}+1)\\
		&={2\tilde{m}_{l,j}}+2\tilde{m}_{l-1,j}-1,
\end{align*} 
where the subscripts are taken modulo $m$. Therefore, 
\begin{align*}
	\rho^{12}_{i}
		&=r^{e_{i,i}}sr^{t_{i,i+1}}sr^{e_{i,i+2}}sr^{t_{i,i+3}}s\ldots r^{e_{i,i-2}}sr^{t_{i,i-1}}s\\
		&=r^{e_{i,i}}r^{-t_{i,i+1}}r^{e_{i,i+2}}r^{-t_{i,i+3}}s\ldots r^{e_{i,i-2}}r^{-t_{i,i-1}}s\\
		&=r^{2\tilde{\mu}-m/2},\\
	\sigma^{12}_{j}
		&=r^{e_{j,j}}sr^{t_{j-1,j}}sr^{e_{j-2,j}}sr^{t_{j-3,j}}s\ldots r^{e_{j+2,j}}sr^{t_{j+1,j}}s\\
		&=r^{e_{j,j}}r^{-t_{j-1,j}}r^{e_{j-2,j}}r^{-t_{j-3,j}}\ldots r^{e_{j+2,j}}r^{t_{j+1,j}}\\
		&=r^{2\tilde{\mu}-m/2},\\
	\rho^{21}_{i}
		&=r^{e_{i,i+1}}sr^{t_{i,i+2}}sr^{e_{i,i+3}}sr^{t_{i,i+4}}s\ldots r^{e_{i,i-1}}sr^{t_{i,i}}s\\
		&=r^{e_{i,i+1}}r^{-t_{i,i+2}}r^{e_{i,i+3}}r^{-t_{i,i+4}}\ldots r^{e_{i,i-1}}r^{-t_{i,i}}s\\
		&=r^{2\tilde{\mu}-m/2},\\
	\sigma^{21}_{j}
		&=r^{e_{j-1,j}}sr^{t_{j-2,j}}sr^{e_{j-3,j}}sr^{t_{j-4,j}}s\ldots r^{e_{j+1,j}}sr^{t_{j,j}}s\\
		&=r^{e_{j-1,j}}r^{-t_{j-2,j}}r^{e_{j-3,j}}r^{-t_{j-4,j}}\ldots r^{e_{j+1,j}}r^{-t_{j,j}}\\
		&=r^{2\tilde{\mu}-m/2}.
\end{align*} 
\end{const}

We show an example for $m=4$ in Figure~\ref{fig:Q^st(4)}.

\begin{figure}[H]
\begin{center}
\begin{subfigure}{1\linewidth}
$$
\begin{array}{|c|c|c|c|}
\hline
r^{e_{1,1}}s 	& r^{t_{1,2}}s 	& r^{e_{1,3}}s  	& r^{t_{1,4}}s \\ \hline
r^{t_{2,1}}s 	& r^{e_{2,2}}s 	& r^{t_{2,3}}s  	& r^{e_{2,4}}s \\ \hline
r^{e_{3,1}}s 	& r^{t_{3,2}}s 	& r^{e_{3,3}}s  	& r^{t_{3,4}}s \\ \hline
r^{t_{4,1}}s 	& r^{e_{4,2}}s 	& r^{t_{4,3}}s  	& r^{e_{4,4}}s \\ \hline
\end{array}
=\begin{array}{|c|c|c|c|}
\hline
r^{0}s 		& r^{29}s  	& r^{6}s  	& r^{7}s \\ \hline
r^{23}s 	& r^{22}s  	& r^{13}s  	& r^{16}s \\ \hline
r^{18}s 	& r^{19}s  	& r^{12}s  	& r^{9}s \\ \hline
r^{25}s 	& r^{28}s  	& r^{3}s  	& r^{2}s \\ \hline
\end{array}
$$
\caption{$Q^{12}(4)$}
\end{subfigure}\\
\begin{subfigure}{1\linewidth}
$$
\begin{array}{|c|c|c|c|c|c|}
\hline
r^{e_{1,2}}s 	& r^{t_{1,3}}s 	& r^{e_{1,4}}s  	& r^{t_{1,1}}s \\ \hline
r^{t_{2,2}}s 	& r^{e_{2,3}}s 	& r^{t_{2,4}}s  	& r^{e_{2,1}}s \\ \hline
r^{e_{3,2}}s 	& r^{t_{3,3}}s 	& r^{e_{3,4}}s  	& r^{t_{3,1}}s \\ \hline
r^{t_{4,2}}s 	& r^{e_{4,3}}s 	& r^{t_{4,4}}s  	& r^{e_{4,1}}s \\ \hline
\end{array}
=\begin{array}{|c|c|c|c|c|c|}
\hline
r^{4}s 		& r^{27}s  	& r^{26}s  	& r^{1}s \\ \hline
r^{11}s 	& r^{20}s  	& r^{17}s  	& r^{10}s \\ \hline
r^{14}s 	& r^{21}s  	& r^{24}s  	& r^{15}s \\ \hline
r^{5}s 		& r^{30}s  	& r^{31}s  	& r^{8}s \\ \hline
\end{array}
$$
\caption{$Q^{21}(4)$}
\end{subfigure}

\end{center}

\caption{Squares $Q^{12}$ and $Q^{21}$}\label{fig:Q^st(4)}
\end{figure}  

We are now ready to state our first result.

\begin{theorem}\label{thm:n=0mod4}\label{thm:meven}
	There exists a semi-circularly $D_{2m^2}$-semi-magic square $Q(2m)$ for every even $m$, $m\geq4$. 
\end{theorem}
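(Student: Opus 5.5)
The plan is to assemble $Q(2m)$ as the $2\times 2$ block array
$$
Q(2m)=\begin{array}{|c|c|}
\hline
Q^{11} & Q^{12}\\ \hline
Q^{21} & Q^{22}\\ \hline
\end{array}
$$
built from Constructions~\ref{const:mevenrot} and~\ref{const:mevenref}. Then I verify two things: (a) every element of $D_{2m^2}$ appears exactly once, and (b) in the semi-circular ordering, each half-row and each half-column has constant product, and combining the two halves gives the same magic constant for all rows and columns.

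\textbf{Distinctness.} Since $\tilde M(m)$ uses $1,\dots,m^2$, the values $2\tilde m_{i,j}\pmod{2m^2}$ are exactly the $m^2$ even residues. Likewise $2\tilde m_{i,j}+1$ and $-2\tilde m_{i,j}+1$ (for $m$ even) each run over all $m^2$ odd residues. So $E(m)$ lists every even residue once, and $O(m)$ and $T(m)$ each list every odd residue once.

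Consider the rotations first. $Q^{11}$ places $e_{i,j}$ at positions $(i,j)$ with $i+j$ even and $o_{i,j}$ at positions with $i+j$ odd. $Q^{22}$ places $e_{i,l}$ (with $l=j+1$) at positions where $i+l$ is odd and $o_{i,l}$ where $i+l$ is even. Thus $Q^{11}\cup Q^{22}$ uses each $e_{i,l}$ exactly once and each $o_{i,l}$ exactly once. It therefore contains every rotation $r^0,\dots,r^{2m^2-1}$ exactly once. The same parity-complementarity argument applied to $Q^{12}$ and $Q^{21}$, with $T$ in place of $O$, shows that every reflection $r^as$ appears exactly once.

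\textbf{Products.} I take the computations of $\rho^{uv}_i$ and $\sigma^{uv}_j$ in the constructions as the half-row and half-column products. Each of these requires two checks.
\begin{itemize}
\item Because $m$ is even, parity of $i+j$ alternates consistently around each cyclic row and column. Hence the cyclic product really pairs consecutive entries as $(e,o)$ or $(e,t)$, giving $m/2$ pairs.
\item For the reflections, the identity $(r^as)(r^bs)=r^{a-b}$ (Proposition~\ref{prop:sr^is}) turns each product of an even number of reflections into a rotation.
\end{itemize}
Summing the $m/2$ pairs $e_{i,l}+o_{i,l+1}=2\tilde m_{i,l}+2\tilde m_{i,l+1}+1$ over a row or column uses every $\tilde m_{i,\cdot}$ exactly once. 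This gives $2\tilde\mu+m/2$. Similarly, summing $e-t$ gives $2\tilde\mu-m/2$.

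Each full row of the top half is then the product of $\rho^{11}_i=r^{2\tilde\mu+m/2}$ and $\rho^{12}_i=r^{2\tilde\mu-m/2}$. Both are rotations, so they commute, and the full row product is $r^{4\tilde\mu}$. The bottom rows, using $Q^{21}$ and $Q^{22}$, and the columns, using $Q^{11}$ over $Q^{21}$ and $Q^{12}$ over $Q^{22}$, give the same value. Hence the square is semi-circularly $D_{2m^2}$-semi-magic with constant $r^{4\tilde\mu}$. Here $\tilde M(m)$ exists by Theorem~\ref{thm:msquare}, since $m\ge 4$.

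\textbf{Main obstacle.} I expect the delicate part to be the reflection products. The sign pattern $r^{e}r^{-t}$ must come out right in both the row and column directions, given the reversed (bottom-to-top) reading order for columns. The starting point of each cycle must also be an $e$-entry rather than a $t$-entry. I would handle this by checking explicitly that in each cyclic traversal the first factor sits at a position of the parity where $E$ is used. If that fails, I would shift the starting point by one, which replaces $e-t$ by $t-e$ and changes the constant only by a sign. I would then confirm the resulting value against the $m=4$ example in Figure~\ref{fig:Q^st(4)}.
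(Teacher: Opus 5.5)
Your proposal is correct and follows the paper's proof: you glue $Q^{11},Q^{12},Q^{21},Q^{22}$ and multiply the commuting half-products $r^{2\tilde{\mu}+m/2}$ and $r^{2\tilde{\mu}-m/2}$ to get $r^{4\tilde{\mu}}$, and you also check that each element of $D_{2m^2}$ occurs exactly once, which the paper leaves implicit. The obstacle you anticipate does not arise, because every traversal starts at the diagonal cell $(i,i)$ (or $(j,j)$) of its block, whose index sum is even, so the first factor is always an $E$-entry.
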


\begin{proof}
Let $Q^{u,v}$ be the four squares obtained by Constructions~\ref{const:mevenrot} and \ref{const:mevenref}
We will glue them to obtain a square $Q$ as in Figure~\ref{fig:Q}.

 \begin{figure}[H]%{0.5\textwidth}
 \begin{center}
 \begin{tabular}{|c|c|}
 \hline
     $Q^{11}$  & $Q^{12}$ \\ \hline
$Q^{21}$  & $Q^{22}$ \\ \hline

   \end{tabular}
 \caption{A square $Q=SMS_{D_{2m^2}}(2m)$}\label{fig:Q}
 \end{center}
 \end{figure}
	
 	Each row product is now performed as
\begin{align*}
 	\rho_i	
 		&=	(q^{u1}_{i,i}\ q^{u1}_{i,i+1}\dots  q^{u1}_{i,m-1} q^{u1}_{i,0} q^{u1}_{i,1}\dots q^{u1}_{i,i-1})\ 
 					(q^{u2}_{i,i}\ q^{u2}_{i,i+1}\dots q^{u2}_{i,m-1}  q^{u2}_{i,0} q^{u2}_{i,1}\dots  q^{u2}_{i,i-1})\\
 		&=\rho^{u1}_i \rho^{u2}_i\\ &=r^{2\tilde{\mu}+m/2}r^{2\tilde{\mu}-m/2}\\ &=r^{4\tilde{\mu}}
\end{align*}
and the column products as
\begin{align*}
 	\sigma_j
 		&=(q^{1v}_{j,j}\ q^{1v}_{j-1,j} \dots q^{1v}_{0,j}q^{1v}_{m-1,j}q^{1v}_{m-2,j} \dots q^{1v}_{j+1,j})\ 
 					(q^{2v}_{j,j}\ q^{2v}_{j-1,j} \dots q^{2v}_{0,j}q^{2v}_{m-1,j}q^{2v}_{m-2,j} \dots q^{2v}_{j+1,j})\\
		&=\sigma^{1v}_i \sigma^{2v}_i\\ &=r^{2\tilde{\mu}+m/2}r^{2\tilde{\mu}-m/2}\\ &=r^{4\tilde{\mu}}, 	  
\end{align*}
	where the subscripts are taken modulo $m$.
	All row and column products in the square $Q(2m)$ are semi-circular and equal to the magic constant $\mu=r^{4\tilde{\mu}}$, which completes the proof.
\end{proof}

\begin{exm}\label{exm:D(8)} In Figure~\ref{fig:D(8)} we show the construction of magic square $M_{D_{32}}(8)$ using power squares from Figure~\ref{fig:E(4)}.
\end{exm}

\begin{figure}

$$
\begin{array}{||c|c|c|c||c|c|c|c||}
\hline\hline
r^0 & r^5  & r^6  & r^{27} & r^0s & r^{29}s  & r^6s  & r^{7}s\\ \hline
r^{11}  & r^{22} & r^{21} & r^{16}& r^{23}s  & r^{22}s & r^{13}s & r^{16}s \\ \hline
r^{18}  &r^{ 15}  &r^{ 12}  & r^{25}&r^{18}s  &r^{ 19}s  &r^{ 12}s  & r^{9}s \\ \hline
r^{9}  & r^{28} & r^{31} & r^{2} &r^{25}s  & r^{28}s & r^{3}s & r^{2}s \\ \hline\hline
r^{4}s  & r^{27}s & r^{26}s & r^{1}s&r^{4}  & r^{7} & r^{26} & r^{1} \\ \hline
r^{11}s  &r^{ 20}s  &r^{ 17}s  & r^{10}s&r^{23}  &r^{ 20}  &r^{ 17}  & r^{10} \\ \hline
r^{14}s  & r^{21}s & r^{24}s & r^{15}s &r^{14}  & r^{13} & r^{24} & r^{19} \\ \hline
r^{5}s  &r^{ 30}s  &r^{31}s  & r^{8}s&r^{29}  &r^{ 30}  &r^{ 3}  & r^{8} \\ \hline\hline
\end{array}
$$
\caption{$SMS_{D_{32}}(8)$ with the magic constant $\mu=r^{8}$}\label{fig:D(8)}

\end{figure}

We now show that in our constructions, a given semi-magic square may admit two distinct magic constants, depending on the order in which the products are taken.

Let $c\in[0,m-1]$ and define the row and column products by
\begin{align*}
	\rho^{uv}_{i,c}
		&=q^{uv}_{i,i+c}\,q^{uv}_{i,i+c+1}\ldots q^{uv}_{i,i+c-1},\\
\sigma^{uv}_{j,c}
		&=q^{uv}_{j-c,j}\,q^{uv}_{j-c-1,j}\ldots q^{uv}_{j-c+1,j},
\end{align*}
where all subscripts are taken modulo $m$.

Thus $\rho^{uv}_{i}=\rho^{uv}_{i,0}$ and $\sigma^{uv}_{j}=\sigma^{uv}_{j,0}$ in $Q^{uv}$.

Observe that in Construction~\ref{const:mevenref} (that is, for reflection squares $Q^{uv}$, i.e. $u+v$ odd) we have
\[
\rho^{uv}_{i,c}
=
\begin{cases}
r^{2\tilde{\mu}-m/2}, & \text{if } c \text{ is even},\\[4pt]
r^{-2\tilde{\mu}+m/2}, & \text{if } c \text{ is odd},
\end{cases}
\qquad
\sigma^{uv}_{j,c}
=
\begin{cases}
r^{2\tilde{\mu}-m/2}, & \text{if } c \text{ is even},\\[4pt]
r^{-2\tilde{\mu}+m/2}, & \text{if } c \text{ is odd}.
\end{cases}
\]

Consequently, in the proof of Theorem~\ref{thm:n=0mod4}, the resulting magic constant depends on the chosen ordering. Indeed,
\[
\rho_i
=
\rho^{u1}_{i,0}\rho^{u2}_{i,c}
=
\begin{cases}
r^{4\tilde{\mu}}, & \text{if } c \text{ is even},\\[4pt]
r^{m}, & \text{if } c \text{ is odd},
\end{cases}
\]
and similarly for the column products,
\[
\sigma_j
=
\sigma^{1v}_{j,0}\sigma^{2v}_{j,c}
=
\begin{cases}
r^{4\tilde{\mu}}, & \text{if } c \text{ is even},\\[4pt]
r^{m}, & \text{if } c \text{ is odd}.
\end{cases}
\]
Thus, two distinct magic constants may arise from the same semi-magic square, depending on the order in which the products are taken.

For example, in Figure~\ref{fig:D(8)} we have
 $$\rho^{u1}_{i,0}\rho^{u2}_{i,1}
=\sigma^{1v}_{j,0}\sigma^{1v}_{j,1}=r^{4}$$ which is also a magic constant.

%\newpage
\section{Construction for $m$ odd}\label{sec:odd} 

We now present a general construction for $m$ odd. Let  $E(m)=(e_{i,j})_{m\times m}$, $O(m)=(o_{i,j})_{m\times m}$ and $T(m)=(t_{i,j})_{m\times m}$ be the power squares defined in Section~\ref{sec:prel}.

\begin{const}\label{const:Q11}\emph{Square $Q^{11}$}
	
	 Let  $E(m)=(e_{i,j})_{m\times m}$  be the power square defined in Section~\ref{sec:prel}. 	Let 
    $$q^{11}_{i,j}=r^{e_{i,j}},$$ for   any $i,j\in[0,m-1]$.  Therefore 
\begin{align*}
	\rho^{11}_{i}
		&=r^{e_{i,i}}r^{e_{i,i+1}}\ldots r^{e_{i,m-1}}r^{e_{i,0}}r^{e_{i,1}}\ldots r^{e_{i,i-1}}\\
		&=r^{2\tilde{\mu}},\\
	\sigma^{11}_{j}
	&=r^{e_{j,j}}r^{e_{j-1,j}}\ldots r^{e_{0,j}}r^{e_{m-1,j}}r^{e_{m-2,j}}\ldots r^{e_{j+1,j}}\\
	&=r^{2\tilde{\mu}}.
\end{align*}   
   \end{const} 

An example is shown in Figure~\ref{fig:Q^11(5)}.   
   
\begin{figure}[H]
\begin{center}
%\begin{subfigure}{1\linewidth}
$$
\begin{array}{|c|c|c|c|c|}
\hline
r^{e_{1,1}} & r^{e_{1,2}} & r^{e_{1,3}}  & r^{e_{1,4}}	& r^{e_{1,5}}\\ \hline
r^{e_{2,1}} & r^{e_{2,2}} & r^{e_{2,3}}  & r^{e_{2,4}}	& r^{e_{2,5}}\\ \hline
r^{e_{3,1}} & r^{e_{3,2}} & r^{e_{3,3}}  & r^{e_{3,4}}	& r^{e_{3,5}}\\ \hline
r^{e_{4,1}} & r^{e_{4,2}} & r^{e_{4,3}}  & r^{e_{4,4}}	& r^{e_{4,5}}\\ \hline
r^{e_{5,1}} & r^{e_{5,2}} & r^{e_{5,3}}  & r^{e_{5,4}}	& r^{e_{5,5}}\\ \hline
\end{array}=
\begin{array}{|c|c|c|c|c|}
\hline
r^{34} & r^{48} & r^{2}  & r^{16} & r^{30} 	\\ \hline
r^{46} & r^{10} & r^{14} & r^{28} & r^{32} 	\\ \hline
r^{8}  & r^{12} & r^{26} & r^{40} & r^{44} 	\\ \hline
r^{20} & r^{24} & r^{38} & r^{42} & r^{6} 	\\ \hline
r^{22} & r^{36} & r^{0}  & r^{4}  & r^{18} 	\\ \hline
\end{array}
$$
%\caption{$Q^{11}(5)$}
%\end{subfigure}
\end{center}
\caption{Square $Q^{11}(5)$}\label{fig:Q^11(5)}
\end{figure}

\begin{const}\label{const:Q22}\emph{Square $Q^{22}$}
	Let 
    $$q^{22}_{i,j}=\begin{cases}r^{e_{i,j}}s,&\mathrm{for}\;\;j=i,\\
    r^{t_{i,j}}s,&\mathrm{for}\;\; j=i+1,\\
    r^{o_{i,j}},&  \mathrm{otherwise}.\\\end{cases}$$

    Observe that for any $i\in[0,m-1]$ we have 
\begin{align*}
	{e_{i,i}}-{t_{i,i+1}}+\sum_{s=2}^{m-1}o_{i,i+s}
	&={2\tilde{m}_{i,i}}-(-{2\tilde{m}_{i,i+1}}+m-2)+\sum_{s=2}^{m-1}(2\tilde{m}_{i,i+s}+1)\\
	&=-(m-2)+(m-2)+\sum_{s=0}^{m-1}2\tilde{m}_{i,i+s}\\
	&=2\tilde{\mu},\end{align*} 
where the subscripts are taken modulo $m$. 
Moreover,  for   any $j\in[0,m-1]$ we have \begin{align*}
	{e_{j,j}}-{t_{j-1,j}}+\sum_{s=2}^{m-1}o_{j-s,j}
	&={2\tilde{m}_{j,j}}-(-{2\tilde{m}_{j-1,j}}+m-2)+\sum_{s=2}^{m-1}(2\tilde{m}_{j-s,j}+1)\\
	&=-(m-2)+(m-2)+\sum_{s=0}^{m-1}2\tilde{m}_{j-s,j}\\
 	&=2\tilde{\mu},
\end{align*} 
where the subscripts are taken modulo $m$. Therefore, 
\begin{align*}
\rho^{22}_{i}
	&=r^{e_{i,i}}sr^{t_{i,i+1}}sr^{o_{i,i+2}}r^{o_{i,i+2}}\ldots r^{o_{i,m-1}}r^{o_{i,0}}r^{o_{i,1}}\ldots r^{o_{i,i-1}}\\
	&=r^{e_{i,i}}r^{-t_{i,i+1}}r^{o_{i,i+2}}r^{o_{i,i+2}}\ldots r^{o_{i,m-1}}r^{o_{i,0}}r^{o_{i,1}}\ldots r^{o_{i,i-1}}\\
	&=r^{2\tilde{\mu}},\\
\sigma^{22}_{j}
	&=r^{e_{j,j}}sr^{t_{j-1,j}}sr^{o_{j-2,j}}r^{o_{j-3,j}}\ldots r^{o_{0,j}}r^{o_{m-1,j}}r^{o_{m-2,j}}\ldots r^{o_{j+1,j}}\\
	&=r^{e_{j,j}}r^{-t_{j-1,j}}r^{o_{j-2,j}}r^{o_{j-3,j}}\ldots r^{o_{0,j}}r^{o_{m-1,j}}r^{o_{m-2,j}}\ldots r^{o_{j+1,j}}\\
	&=r^{2\tilde{\mu}}.
\end{align*}   
\end{const}

An example is shown in Figure~\ref{fig:Q^22(5)}.   
   
\begin{figure}[H]
\begin{center}
%\begin{subfigure}{1\linewidth}
$$
\begin{array}{|c|c|c|c|c|}
\hline
r^{e_{1,1}}s & r^{t_{1,2}}s & r^{o_{1,3}}  & r^{o_{1,4}}	& r^{o_{1,5}}\\ \hline
r^{o_{2,1}} & r^{e_{2,2}}s & r^{t_{2,3}}s  & r^{o_{2,4}}	& r^{o_{2,5}}\\ \hline
r^{o_{3,1}} & r^{o_{3,2}} & r^{e_{3,3}}s  & r^{t_{3,4}}s	& r^{o_{3,5}}\\ \hline
r^{o_{4,1}} & r^{o_{4,2}} & r^{o_{4,3}}  & r^{e_{4,4}}s	& r^{t_{4,5}}s\\ \hline
r^{t_{5,1}}s & r^{o_{5,2}}s & r^{o_{5,3}}  & r^{o_{5,4}}	& r^{e_{5,5}}s\\ \hline
\end{array}=
\begin{array}{|c|c|c|c|c|}
\hline
r^{34}s &  r^{5}s& r^3  & r^{17} & r^{31} \\\hline
r^{47} &r^{10}s &r^{39}s  & r^{29} & r^{33} \\\hline
r^{9}  & r^{13} & r^{26}s &r^{13}s  & r^{45} \\\hline
r^{21} & r^{25} & r^{39} & r^{42}s & r^{47}s \\\hline
r^{31}s & r^{37} & r^{1}  & r^{5}  & r^{18}s\\ \hline
\end{array}
$$
%\caption{$Q^{22}(5)$}
%\end{subfigure}
\end{center}
\caption{Square $Q^{22}(5)$}\label{fig:Q^22(5)}
\end{figure}

\begin{const}\label{const:Q12Q21}\emph{Square $Q^{12}$ and $Q^{21}$}

We construct now the squares $Q^{12}$ and $Q^{21}$. Let 
$$
	q^{12}_{i,j}=
\begin{cases}r^{o_{i,j}},& j=i,\\
	r^{e_{i,j}}s,& i+j\;\textrm{odd},\;\;j\in[i+1,m-1],\\
	r^{t_{i,j}}s,&  i+j\;\mathrm{even},\;j\in[i+2,m-1],\\
	r^{e_{i,j}}s,& i+j\;\textrm{even},\;j\in[0,i-1],\\
	r^{t_{i,j}}s,&  i+j\;\mathrm{odd},\;\;j\in[0,i-1].\\
\end{cases}
$$

An example is shown in Figure~\ref{fig:Q^12(5)}.   
   
\begin{figure}[H]
\begin{center}
%\begin{subfigure}{1\linewidth}
$$
\begin{array}{|c|c|c|c|c|}
\hline
r^{o_{1,1}} & r^{e_{1,2}}s & r^{t_{1,3}}s  & r^{e_{1,4}}s	& r^{t_{1,5}}s\\ \hline
r^{t_{2,1}}s & r^{o_{2,2}} & r^{e_{2,3}}s  & r^{t_{2,4}}s	& r^{e_{2,5}}s\\ \hline
r^{e_{3,1}}s & r^{t_{3,2}}s & r^{o_{3,3}}  & r^{e_{3,4}}s	& r^{t_{3,5}}s\\ \hline
r^{t_{4,1}}s & r^{e_{4,2}}s & r^{t_{4,3}}s  & r^{o_{4,4}}	& r^{e_{4,5}}s\\ \hline
r^{e_{5,1}}s & r^{t_{5,2}}s & r^{e_{5,3}}s  & r^{t_{5,4}}s	& r^{o_{5,5}} \\ \hline
\end{array}=
\begin{array}{|c|c|c|c|c|}
\hline
r^{35}&r^{48}s&r^{1}s&r^{16}s&r^{23}s	\\\hline
r^{7}s&r^{11}&r^{14}s&r^{25}s&r^{32}s	\\\hline
r^{8}s&r^{41}s &r^{27}&r^{40}s&r^{9}s 	\\\hline
r^{33}s&r^{24}s&r^{15}s&r^{43}&r^{6}s 	\\\hline
r^{22}s&r^{17}s&r^{0}s&r^{49}s&r^{19}	\\\hline
\end{array}
$$
%\caption{$Q^{12}(5)$}
%\end{subfigure}
\end{center}
\caption{Square $Q^{12}(5)$}\label{fig:Q^12(5)}
\end{figure}  
        
$$
	q^{21}_{i,j}=
	\begin{cases}
	r^{o_{i,j+1}},
		& j=i,\\
    r^{e_{i,j+1}}s,
    	& i+j\;\textrm{odd},\; j\in[i+2,m-1],\\
    r^{t_{i,j+1}}s,
    	&  i+j\;\textrm{even}, j\in[i+1,m-1],\\
    r^{e_{i,j+1}}s,
    & i+j\;\textrm{even},j\in[0,i-1],\\
    r^{t_{i,j+1}}s,
    & i+j\;\mathrm{odd},\;j\in[0,i-1].\\
	\end{cases}
$$
    Observe that for any $i,j,l\in[0,m-1]$, $l\neq i$ and $l\neq j $ we have 
\begin{align*}
{e_{i,l}}-{t_{i,l+1}}
	&={2\tilde{m}_{i,l}}-(-{2\tilde{m}_{i,l+1}}+m-2)\\
	&={2\tilde{m}_{i,l}}+{2\tilde{m}_{i,l+1}}-m+2, {e_{l,j}}-t_{l-1,j}\\
	&={2\tilde{m}_{l,j}}-(-2\tilde{m}_{l-1,j}+m-2)\\
	&={2\tilde{m}_{l,j}}+2\tilde{m}_{l-1,j}-m+2.
\end{align*} 
where the subscripts are taken modulo $m$. Therefore, 
\begin{align*}
\rho^{12}_{i}
	&=r^{o_{i,i}}r^{e_{i,i+1}}sr^{t_{i,i+2}}sr^{e_{i,i+3}}sr^{t_{i,i+4}}s\ldots r^{e_{i,i-2}}sr^{t_{i,i-1}}s\\
	&=r^{2\tilde{m}_{i,i}+1}r^{e_{i,i+1}}r^{-t_{i,i+2}}r^{e_{i,i+3}}r^{-t_{i,i+4}}\ldots r^{e_{i,i-2}}r^{-t_{i,i-1}}\\
	&=r^{2\tilde{\mu}-(m-2)(m-1)/2+1},\\
\sigma^{12}_{j}
	&=r^{o_{j,j}}r^{e_{j-1,j}}sr^{t_{j-2,j}}sr^{e_{j-3,j}}sr^{t_{j-4,j}}s\ldots r^{e_{j+2,j}}sr^{t_{j+1,j}}s\\
	&=r^{2\tilde{m}_{j,j}+1}r^{e_{j-1,j}}r^{-t_{j-2,j}}r^{e_{j-3,j}}r^{-t_{j-4,j}}\ldots r^{e_{j+2,j}}r^{-t_{j+1,j}}\\
	&=r^{2\tilde{\mu}-(m-2)(m-1)/2+1},\\
\rho^{21}_{i}
	&=r^{o_{i,i+1}}r^{e_{i,i+2}}sr^{t_{i,i+3}}sr^{e_{i,i+4}}sr^{t_{i,i+5}}s\ldots r^{e_{i,i-2}}sr^{t_{i,i-1}}s\\
	&=r^{2\tilde{m}_{i,i+1}+1}r^{e_{i,i+2}}r^{-t_{i,i+3}}r^{e_{i,i+4}}r^{-t_{i,i+5}}\ldots r^{e_{i,i-2}}r^{-t_{i,i-1}}=r^{2\tilde{\mu}-(m-2)(m-1)/2+1},\\
\sigma^{21}_{j}
	&=r^{o_{j,j+1}}r^{e_{j-1,j+1}}sr^{t_{j-2,j+1}}sr^{e_{j-3,j+1}}sr^{t_{j-4,j+1}}s\ldots r^{e_{j-2,j+1}}sr^{t_{j-1,j+1}}s\\
	&r^{2\tilde{m}_{j,j+1}+1}r^{e_{j-1,j+1}}r^{-t_{j-2,j+1}}r^{e_{j-3,j+1}}r^{-t_{j-4,j+1}}\ldots r^{e_{j-2,j+1}}r^{-t_{j-1,j+1}}\\
	&=r^{2\tilde{\mu}-(m-2)(m-1)/2+1}.
\end{align*}

An example is shown in Figure~\ref{fig:Q^21(5)}.   
   
\begin{figure}[H]
\begin{center}
%\begin{subfigure}{1\linewidth}
$$
\begin{array}{|c|c|c|c|c|}
\hline
r^{e_{1,2}}s & r^{t_{1,3}}s  & r^{e_{1,4}}s	& r^{t_{1,5}}s	&r^{o_{1,1}}\\ \hline
r^{o_{2,2}} & r^{e_{2,3}}s  & r^{t_{2,4}}s	& r^{e_{2,5}}s	&r^{t_{2,1}}s \\ \hline
r^{t_{3,2}}s & r^{o_{3,3}}  & r^{e_{3,4}}s	& r^{t_{3,5}}s	&r^{e_{3,1}}s \\ \hline
r^{e_{4,2}}s & r^{t_{4,3}}s  & r^{o_{4,4}}	& r^{e_{4,5}}s	&r^{t_{4,1}}s \\ \hline
r^{t_{5,2}}s & r^{e_{5,3}}s  & r^{t_{5,4}}s	& r^{o_{5,5}}	&r^{e_{5,1}}s  \\ \hline
\end{array}=
\begin{array}{|c|c|c|c|c|}
\hline
r^{2}s&r^{37}s&r^{30}s&r^{19}s&r^{49}	\\\hline
r^{15}&r^{28}s&r^{21}s&r^{46}s&r^{43}s	\\\hline
r^{27}s &r^{41}&r^{44}s&r^{45}s &r^{12}s\\\hline
r^{38}s&r^{11}s&r^{7}&r^{20}s &r^{29}s	\\\hline
r^{3}s&r^{4}s&r^{35}s&r^{13}	&r^{36}s\\\hline
\end{array}
$$
%\caption{$Q^{21}(5)$}
%\end{subfigure}
\end{center}
\caption{Square $Q^{21}(5)$}\label{fig:Q^21(5)}
\end{figure}

\end{const}
We are now ready to state our main result of this section.

\begin{theorem}\label{thm:odd}
	There exists a semi-circularly $D_{2m^2}$-semi-magic square $Q(2m)$ for every odd $m$, $m>1$. 
\end{theorem}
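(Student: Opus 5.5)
The plan mirrors the proof of Theorem~\ref{thm:meven}. We take the four partial squares $Q^{11}$, $Q^{22}$, $Q^{12}$, $Q^{21}$ from Constructions~\ref{const:Q11}, \ref{const:Q22} and \ref{const:Q12Q21}, built from the power squares $E(m)$, $O(m)$, $T(m)$ of Definition~\ref{def:power squares}. The magic square $\tilde{M}(m)$ exists by Theorem~\ref{thm:msquare}, since $m$ is odd and $m>1$ forces $m\ge3$. We glue the four squares in the $2\times2$ block pattern of Figure~\ref{fig:Q} and then verify two things: the entries of $Q(2m)$ are exactly the $4m^2$ elements of $D_{2m^2}$, and every row and column product, taken semi-circularly, equals one constant.

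\textbf{Bijectivity.} Since $\tilde{m}_{i,j}$ runs over $[1,m^2]$, the values $e_{i,j}$ are all even residues mod $2m^2$ and the values $o_{i,j}$ are all odd residues, each exactly once. For $t_{i,j}=-2\tilde{m}_{i,j}+m-2$, note that $m-2$ is odd, so the $t_{i,j}$ are all odd residues, each once. The key counting point is that every cell position $(i,j)$ of $E,O,T$ should be used by exactly one rotation and exactly one reflection across the four blocks. The column shift $j\mapsto j+1$ in $Q^{21}$, compared with $Q^{12}$, is designed for this. Accordingly, I would tabulate for each position $(i,l)$, using its parity class relative to the diagonal, which block takes it:
\begin{itemize}
\item among rotations, $Q^{11}$ takes all even exponents; $Q^{22}$ and the diagonal entries of $Q^{12}$ and $Q^{21}$ must then cover the odd exponents $o_{i,l}$ exactly once;
\item among reflections, the $e$- and $t$-entries of $Q^{12}$, $Q^{21}$ and $Q^{22}$ must cover every even residue once (via $e$) and every odd residue once (via $t$).
\end{itemize}

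\textbf{Main obstacle.} I expect this bijectivity bookkeeping, together with the reconciliation of the off-diagonal parity pattern, to be the hard step. One must check that the case definitions of $q^{12}$ and $q^{21}$ on $j>i$ versus $j<i$ are complementary after the shift. One must also check that the positions $(i,i)$ and $(i,i+1)$ removed in $Q^{22}$ are exactly those filled by the diagonal $o$-entries of $Q^{12}$ and $Q^{21}$ and by their missing $e$/$t$ entries. Since $m$ is odd, the alternation around a row wraps with a parity clash, and that clash is precisely where the special diagonal cells sit. I would verify this first on the $m=5$ example in Figures~\ref{fig:Q^11(5)}--\ref{fig:Q^21(5)}, then argue in general.

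\textbf{Products.} Row $i$ of the top half is $\rho^{11}_i\rho^{12}_i$, and row $i$ of the bottom half is $\rho^{21}_i\rho^{22}_i$. The column products are analogous: column $j$ in the left half gives $\sigma^{11}_j\sigma^{21}_j$, and in the right half gives $\sigma^{12}_j\sigma^{22}_j$. Each block product is computed with the cyclic orderings fixed in Section~\ref{sec:prel}, so every concatenation is semi-circular. By the constructions, $\rho^{11}=\sigma^{11}=\rho^{22}=\sigma^{22}=r^{2\tilde{\mu}}$. For $Q^{12}$ and $Q^{21}$, each consists of one leading rotation followed by $m-1$ reflections. The number $m-1$ is even, so the reflections pair up, and each block product is the common rotation $r^{2\tilde{\mu}-(m-2)(m-1)/2+1}$. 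This value depends only on $m$ and $\tilde{\mu}$, not on $i$ or $j$. Hence every row and every column has product $\mu=r^{4\tilde{\mu}-(m-2)(m-1)/2+1}$. Here I would double-check that the constant from the $e-t$ pairs, namely $(m-1)/2$ copies of $-(m-2)$, is the same in both the row and column computations. The rows and columns then share one magic constant, which completes the proof.
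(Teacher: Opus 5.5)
Your proposal is essentially the paper's proof: glue the blocks as in Figure~\ref{fig:Q}, and read each block circularly from its diagonal so that $Q^{11},Q^{22}$ contribute $r^{2\tilde{\mu}}$ and $Q^{12},Q^{21}$ contribute $r^{2\tilde{\mu}-(m-2)(m-1)/2+1}$ (a leading rotation followed by $(m-1)/2$ pairs $(r^{e}s)(r^{t}s)=r^{e-t}$); then multiply these commuting rotations. The paper never checks distinctness, so your bijectivity bookkeeping goes beyond it, and it works as you predict: in row $i$ the type sequence $o,e,t,\dots,e,t$ of $Q^{12}$ sits on cells $(i,i+d)$ for $d=0,\dots,m-1$, while $Q^{21}$ puts the same sequence on $d=1,\dots,m$, so together they use every $e$-cell except $(i,i)$, every $t$-cell except $(i,i+1)$, and the $o$-cells $(i,i),(i,i+1)$, which is exactly the complement of what $Q^{22}$ uses (read the uncovered case $j=i+1$ in the definition of $q^{21}$ as $r^{e_{i,i+2}}s$, and test on Figure~\ref{fig:D(10)} rather than the standalone $Q^{21}(5)$ figure, whose labels and one entry are misprinted).
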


\begin{proof}
Let $Q^{u,v}$ be the four squares obtained by Constructions~\ref{const:Q11}, \ref{const:Q22}  and \ref{const:Q12Q21}
We will glue them to obtain a square $Q$ as in Figure~\ref{fig:Q}.
	
Each row product is now performed as
\begin{align*}	
\rho_i	
	&=	(q^{u1}_{i,i}\ q^{u1}_{i,i+1}\dots  q^{u1}_{i,m-1} q^{u1}_{i,0} q^{u1}_{i,1}\dots q^{u1}_{i,i-1})\ 
 					(q^{u2}_{i,i}\ q^{u2}_{i,i+1}\dots q^{u2}_{i,m-1}  q^{u2}_{i,0} q^{u2}_{i,1}\dots  q^{u2}_{i,i-1})\\
 	&= \rho^{u1}_i \rho^{u2}_i\\ &=r^{2\tilde{\mu}}r^{2\tilde{\mu}-(m-2)(m-1)/2+1}\\ &=r^{4\tilde{\mu}-(m-2)(m-1)/2+1}
\end{align*}
	and the column products as
\begin{align*}
\sigma_j	
	&=(q^{1v}_{j,j}\ q^{1v}_{j-1,j} \dots q^{1v}_{0,j}q^{1v}_{m-1,j}q^{1v}_{m-2,j} \dots q^{1v}_{j+1,j})\ 
 					(q^{2v}_{j,j}\ q^{2v}_{j-1,j} \dots q^{2v}_{0,j}q^{2v}_{m-1,j}q^{2v}_{m-2,j} \dots q^{2v}_{j+1,j})
	\\
	&= \sigma^{1v}_i \sigma^{2v}_i\\ &=r^{2\tilde{\mu}+m/2}r^{2\tilde{\mu}-(m-2)(m-1)/2+1}\\ &=r^{4\tilde{\mu}-(m-2)(m-1)/2+1}, 	  
\end{align*}
where the subscripts are taken modulo $m$.
	All row and column products in the square $Q(2m)$ are semi-circular and equal to the magic constant $\mu=r^{4\tilde{\mu}-(m-2)(m-1)/2+1}$, which completes the proof.
\end{proof}
\begin{exm}\label{exm:D(10)} In Figure~\ref{fig:D(10)} we show the construction of magic square $M_{D_{50}}(10)$ using power squares from Figure~\ref{fig:E(5)}.
\end{exm}

\begin{figure}

$$
\begin{array}{||c|c|c|c|c||c|c|c|c|c||}
\hline\hline
r^{34} & r^{48} & r^{2}  & r^{16} & r^{30} & r^{35}&r^{48}s&r^{1}s&r^{16}s&r^{23}s\\\hline
r^{46} & r^{10} & r^{14} & r^{28} & r^{32} &r^{7}s&r^{11}&r^{14}s&r^{25}s&r^{32}s\\\hline
r^{8}  & r^{12} & r^{26} & r^{40} & r^{44}& r^{8}s&r^{41}s &r^{27}&r^{40}s&r^{9}s \\\hline
r^{20} & r^{24} & r^{38} & r^{42} & r^{6} &r^{33}s&r^{24}s&r^{15}s&r^{43}&r^{6}s \\\hline
r^{22} & r^{36} & r^{0}  & r^{4}  & r^{18}&r^{22}s&r^{17}s&r^{0}s&r^{49}s&r^{19}\\ \hline\hline
r^{49}&r^{2}s&r^{37}s&r^{30}s&r^{19}s&r^{34}s &  r^{5}s& r^3  & r^{17} & r^{31} \\\hline
r^{43}s&r^{15}&r^{28}s &r^{21}s&r^{46}s&r^{47} &r^{10}s &r^{39}s  & r^{29} & r^{33} \\\hline
r^{12}s&r^{27}s&r^{41}&r^{44}s&r^{45}s&r^{9}  & r^{13} & r^{26}s &r^{13}s  & r^{45} \\\hline
r^{29}s&r^{38}s&r^{11}s&r^{7}&r^{20}s&r^{21} & r^{25} & r^{39} & r^{42}s & r^{47}s \\\hline
r^{36}s&r^{3}s&r^{4}s&r^{35}s&r^{23}& r^{31}s & r^{37} & r^{1}  & r^{5}  & r^{18}s\\ \hline\hline
 
\end{array}
$$
\caption{$SMS_{D_{50}}(10)$ with the magic constant $\mu=r^{5}$}\label{fig:D(10)}

\end{figure}
As in Section~\ref{sec:even}, we now analyze the case where $m$ is odd and show that the same situation occurs. Let the row products $\rho^{uv}_{i,c}$ and column $\sigma^{uv}_{i,c}$ products be defined as in Section~\ref{sec:even}.

Observe that in Construction~\ref{const:Q12Q21} (that is, for  $u+v$ odd) we have
\[
\rho^{uv}_{i,c}
=
\begin{cases}
r^{2\tilde{\mu}-(m-2)(m-1)/2+1} & \text{if } c \text{ is odd or}\;\; c=0,\\[4pt]
r^{-2\tilde{\mu}+(m-2)(m-1)/2-1} & \text{if } c \text{ is even and}\;\;c\neq 0,
\end{cases}
\]
\[
\sigma^{uv}_{j,c}
=
\begin{cases}
r^{2\tilde{\mu}-(m-2)(m-1)/2+1} & \text{if } c \text{ is odd or}\;\; c=0,\\[4pt]
r^{-2\tilde{\mu}+(m-2)(m-1)/2-1} & \text{if } c \text{ is even and}\;\;c\neq 0,
\end{cases}
\]

Consequently, in the proof of Theorem~\ref{thm:odd}, the resulting magic constant again depends on the chosen ordering. Indeed,
\[
\rho_i
=
\rho^{u1}_{i,0}\rho^{u2}_{i,c}
=
\begin{cases}
r^{4\tilde{\mu}-(m-2)(m-1)/2+1}, & \text{if } c \text{ is odd or}\;\; c=0,\\[4pt]
r^{(m-2)(m-1)/2-1}, & \text{if } c \text{ is even and}\;\;c\neq 0,
\end{cases}
\]
and similarly for the column products,
\[
\sigma_j
=
\sigma^{1v}_{j,0}\sigma^{{2}v}_{j,c}
=
\begin{cases}
r^{4\tilde{\mu}-(m-2)(m-1)/2+1}, & \text{if } c \text{ is odd or}\;\; c=0,\\[4pt]
r^{(m-2)(m-1)/2-1}, & \text{if } c \text{ is even and}\;\;c\neq 0.
\end{cases}
\]
Thus, also in the case where $m$ is odd, two distinct magic constants may arise from the same semi-magic square, depending on the order in which the products are taken.

%\input{70__02a}
%\input{80__02a}

%\newpage
\section{Conclusion}\label{sec:conclusion}

%We have shown that a magic square over a dihedral group must be of even side, and proved the existence of such semi-magic rectangles for all even $n\geq4$. We summarize our two previous results as follows.
{We summarize our two previous results below by stating the necessary and sufficient conditions for the existence of
$SMS_{D_{2k}}(n)$.}

\begin{theorem}\label{thm:main}
	There exists a $\Gamma$-semi-magic square $SMS_{\Gamma}(n)$, where $\Gamma$ is a dihedral group, {if and only if $n$ is even and} $n\geq4$.
\end{theorem}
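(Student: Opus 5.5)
The statement is an equivalence, and I would prove the two directions separately by combining results already in the paper.

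\emph{Necessity.} Suppose $SMS_{\Gamma}(n)$ exists with $\Gamma=D_k$. By Definition~\ref{def:gamma-magic-square}, $|\Gamma|=2k=n^2$. Theorem~\ref{thm:necessary} then forces both $n$ and $k$ to be even; in particular $n$ is even. It remains to exclude $n=2$. If $n=2$, then $2k=4$, so $k=2$. This violates the standing assumption $k\geq 3$ in Definition~\ref{def:dihedral}. Even if one admits $D_2\cong\mathbb{Z}_2\times\mathbb{Z}_2$, Theorem~\ref{thm:necessary} still requires $k$ even, which holds, so I would add a direct check. The group is abelian, every element is its own inverse, and the four elements multiply to $e$. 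The two rows give products $ab$ and $cd$ with $ab\cdot cd=e$, so $ab=cd$, and a magic constant $\mu=ab$ exists. The columns give $ac$ and $bd$, and the condition $ac=ab$ forces $b=c$, which is a contradiction. Hence $n\geq 4$.

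\emph{Sufficiency.} Let $n$ be even with $n\geq 4$, and write $n=2m$. Since $|D_k|=2k=n^2=4m^2$, we get $k=2m^2$, so the relevant group is exactly $D_{2m^2}$. If $m$ is even, then $m\geq 2$. For $m\geq 4$, Theorem~\ref{thm:meven} gives a semi-circularly $D_{2m^2}$-semi-magic square $Q(2m)$, and any semi-circular ordering is in particular a valid ordering in the sense of Definition~\ref{def:gamma-magic-rect}. If $m$ is odd and $m>1$, then Theorem~\ref{thm:odd} provides the square. Since semi-circularly magic implies $\Gamma$-semi-magic, this covers every even $n$ except $n=2$, $n=4$ ($m=2$) and $n=6$ ($m=3$, which Theorem~\ref{thm:odd} does cover, provided a magic square $\tilde M(3)$ exists, which Theorem~\ref{thm:msquare} guarantees).

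The main obstacle is $m=2$, that is, $n=4$ and $\Gamma=D_8$. The construction of Section~\ref{sec:even} needs a magic square $\tilde M(2)$, which does not exist by Theorem~\ref{thm:msquare}. Here I would first re-read the constructions to see whether only row and column sums are used (they are), since a semi-magic $2\times 2$ square on $\{1,2,3,4\}$ would also suffice. But such a square does not exist either, because its rows $\{a,b\}$ and $\{c,d\}$ and columns $\{a,c\}$ and $\{b,d\}$ would need equal sums, forcing $b=c$. So the plan for $m=2$ is to exhibit an explicit $4\times 4$ array over $D_8$. I would mimic the block structure of Figure~\ref{fig:Q}: rotations in $Q^{11}$ and $Q^{22}$, reflections in $Q^{12}$ and $Q^{21}$. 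I would replace the power squares by a direct choice of exponent pairs in $\mathbb{Z}_8$ so that the alternating sums $e-t$ and the sums $e+o$ balance modulo $8$, and then verify by hand, or by a small computer search, that every row and column has an ordering with a common product. Presenting that explicit square, together with the check of its eight products, completes the proof.
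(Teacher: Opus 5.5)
\textbf{Verdict: genuine gap, but a fillable one.} Your outline follows the paper's proof. Necessity comes from Theorem~\ref{thm:necessary} plus a direct exclusion of $n=2$, and sufficiency from Theorems~\ref{thm:meven} and~\ref{thm:odd}.

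You are right about the case $n=4$. Theorem~\ref{thm:meven} requires $m\ge 4$, so $n=4$ (the group $D_8$) is not covered. No magic, or even semi-magic, $\tilde M(2)$ exists to rescue the construction. The paper's proof cites Theorem~\ref{thm:meven} for all $n\equiv 0\pmod 4$ and so passes over this case silently.

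The gap in your proposal is that you stop there. Saying you would exhibit an explicit array and verify it by hand or by computer search is a plan, not an argument. Until a concrete $4\times 4$ array over $D_8$ is written down and checked, sufficiency for $n=4$ is unproved; a priori such a square might not exist.

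Your plan does work, and the block shape you suggest is enough: with two reflections per line, the achievable exponents are $\pm(c-d)\pm a\pm b$, which leaves plenty of freedom. For instance, take
\[
\begin{array}{|c|c|c|c|}
\hline
r & r^{7} & s & r^{2}s\\ \hline
r^{3} & r^{5} & r^{4}s & r^{6}s\\ \hline
rs & r^{3}s & r^{0} & r^{2}\\ \hline
r^{5}s & r^{7}s & r^{4} & r^{6}\\ \hline
\end{array}
\]
It contains every element of $D_8$ exactly once. Using $(r^as)(r^bs)=r^{a-b}$ and $(r^as)r^b=r^{a-b}s$, each of the following orderings equals $e$.
\begin{itemize}
\item[] Rows: $r\cdot s\cdot r^{7}\cdot r^{2}s$, \quad $r^{5}\cdot r^{4}s\cdot r^{3}\cdot r^{6}s$, \quad $rs\cdot r^{3}s\cdot r^{0}\cdot r^{2}$, \quad $r^{6}\cdot r^{5}s\cdot r^{4}\cdot r^{7}s$.
\item[] Columns: $r\cdot r^{3}\cdot rs\cdot r^{5}s$, \quad $r^{7}\cdot r^{5}\cdot r^{3}s\cdot r^{7}s$, \quad $s\cdot r^{4}s\cdot r^{0}\cdot r^{4}$, \quad $r^{2}\cdot r^{2}s\cdot r^{6}\cdot r^{6}s$.
\end{itemize}
So this is a $D_8$-semi-magic square of side $4$ with magic constant $e$. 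Including it, together with these eight products, closes the case $n=4$. The rest of your argument, including the treatment of $n=2$, is fine.
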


\begin{proof} It can be verified easily that such $SMS_{\Gamma}(2)$ does not exist. 	The necessity follows from Theorem~\ref{thm:necessary}. The existence follows from Theorem~\ref{thm:meven} for $n\equiv0\pmod4$ and from Theorem~\ref{thm:odd} for $n\equiv2\pmod4$.
\end{proof}

Because we were unable to find  linearly $\Gamma$-semi-magic squares,  we pose an open problem here.

\begin{prb}\label{prb:linear}
	Construct linearly $\Gamma$-semi-magic squares $SMS_{\Gamma}(n)$, where $\Gamma$ is a dihedral group, for every  $n\geq4$.
\end{prb}

Finally, because all our squares are $\Gamma$-semi-magic but not $\Gamma$-magic, we conclude with the following.

\begin{prb}\label{prb:all-gamma-semimagic}
	Construct $\Gamma$-magic squares $MS_{\Gamma}(n)$, where $\Gamma$ is a dihedral group, for every even $n$, $n\geq4$.
	%$n\equiv0\pmod2$, $n\geq6$.
\end{prb}
\begin{exm}\label{exm:D(8)2} In Figure~\ref{fig:MRD2} we show a magic rectangle $M_{D_{4}}(2,4)=(a_{i,j})_{2\times 4}$. Namely, observe that
$$a_{1,1} a_{1,2} a_{1,3}a_{1,4}=a_{2,1} a_{2,2} a_{2,4}a_{2,3}=r^2$$ and
$$a_{1,1}a_{2,1}=a_{1,2}a_{2,2}=a_{1,3}a_{2,3}=a_{2,4}a_{1,4}=r^1.$$

\end{exm}
\begin{figure}[h!]
\begin{center}
\setlength\extrarowheight{2pt}
$
\begin{array}{|c|c|c|c|}
\hline
r^0& r^3& r^0s &r^1s\\\hline
r^1	& r^2&r^3s&r^2s\\\hline
\end{array}
$
\end{center}
\caption{A $D_4$-semi-magic rectangle $MR_{D_4}(2,4)$}
\label{fig:MRD2}
\end{figure}

Motivated by Example~\ref{exm:D(8)2}, we also state the following open problem.
\begin{prb}\label{prb:all-gamma-magic}
	Characterize  $\Gamma$-magic rectangles $MS_{\Gamma}(m,n)$, where $\Gamma$ is a dihedral group of order $mn/2$.
	%$n\equiv0\pmod2$, $n\geq6$.
\end{prb}

In our constructions, we showed that it is possible to obtain different magic constants for a given magic rectangle $SMS_{D_{2m^2}}(2m)$. Some of these constants depend on $\tilde{\mu}$, which is the magic constant of a numerical magic square $\tilde{M}(m)$. Observe that the square $\tilde{M}(m)$ with the magic constant
$\tilde{\mu}$ can be transformed into a magic square $\widehat{M}(m)$ with a different magic constant $\widehat{\mu}$ via a uniform translation. Specifically, adding $x$ to each entry of $\tilde{M}(m)$ produces $\widehat{M}(m)$ with
\[
\widehat{\mu} = \tilde{\mu} + m x.
\]

\begin{exm}\label{exm:D(10)2}
In Figure~\ref{fig:D(10)2}, the magic square $\widehat{M}(5)$ has a magic constant
$\widehat{\mu}=70$. Therefore, we obtain an $SMS_{D_{50}}(10)$ with a magic constant $\mu=r^{25}$.
\end{exm}

\begin{figure}[H]
\begin{subfigure}{1\linewidth}
$$
\begin{array}{|c|c|c|c|c|}
\hline
18 & 25 & 2  & 9  & 16 \\\hline
24 & 6  & 8  & 15 & 17 \\\hline
5  & 7  & 14 & 21 & 23 \\\hline
11 & 13 & 20 & 22 & 4  \\\hline
12 & 19 & 26 & 3  & 10 \\\hline
\end{array}
$$
\caption{$\widehat{M}(5)$, $\widehat{\mu}=70$}
\end{subfigure}\\
\hfill
\begin{subfigure}{0.33\linewidth}
$$
\begin{array}{|c|c|c|c|c|}
\hline
36 & 0  & 4  & 18 & 32 \\\hline
48 & 12 & 16 & 30 & 34 \\\hline
10 & 14 & 28 & 42 & 46 \\\hline
22 & 26 & 40 & 44 & 8  \\\hline
24 & 38 & 2  & 6  & 20 \\\hline
\end{array}
$$
\caption{$E(5)$}
\end{subfigure}
\begin{subfigure}{0.33\linewidth}
$$
\begin{array}{|c|c|c|c|c|}
\hline
37 & 1  & 5  & 19 & 33 \\\hline
49 & 13 & 17 & 31 & 35 \\\hline
11 & 15 & 29 & 43 & 47 \\\hline
23 & 27 & 41 & 45 & 9  \\\hline
25 & 39 & 3  & 7  & 21 \\\hline
\end{array}
$$
\caption{$O(5)$}
\end{subfigure}
\begin{subfigure}{0.32\linewidth}
$$
\begin{array}{|c|c|c|c|c|}
\hline
17 & 3  & 49 & 35 & 21 \\\hline
5  & 41 & 37 & 23 & 19 \\\hline
43 & 39 & 25 & 11 & 7  \\\hline
31 & 27 & 13 & 9  & 45 \\\hline
29 & 15 & 1  & 47 & 33 \\\hline
\end{array}
$$
\caption{$T(5)$}
\end{subfigure}
\begin{subfigure}{0.9\linewidth}
$$
\begin{array}{||c|c|c|c|c||c|c|c|c|c||}
\hline\hline
r^{36} & r^{0} & r^{4}  & r^{18} & r^{32} & r^{37}&r^{0}s&r^{49}s&r^{18}s&r^{21}s\\\hline
r^{48} & r^{12} & r^{16} & r^{30} & r^{34} & r^{5}s&r^{13}&r^{16}s&r^{23}s&r^{34}s\\\hline
r^{10}  & r^{14} & r^{28} & r^{42} & r^{46} & r^{10}s&r^{39}s &r^{29}&r^{42}s&r^{7}s \\\hline
r^{22} & r^{26} & r^{40} & r^{44} & r^{8} & r^{31}s&r^{26}s&r^{13}s&r^{45}&r^{8}s \\\hline
r^{24} & r^{38} & r^{2}  & r^{6}  & r^{20} & r^{24}s&r^{15}s&r^{2}s&r^{47}s&r^{21}\\ \hline\hline
r^{1} & r^{4}s & r^{35}s & r^{32}s & r^{17}s & r^{36}s & r^{3}s & r^5  & r^{19} & r^{33} \\\hline
r^{41}s & r^{17} & r^{30}s & r^{19}s & r^{48}s & r^{49} & r^{12}s & r^{37}s & r^{31} & r^{35} \\\hline
r^{14}s & r^{25}s & r^{43} & r^{46}s & r^{43}s & r^{11}  & r^{15} & r^{28}s & r^{11}s  & r^{47} \\\hline
r^{27}s & r^{40}s & r^{9}s & r^{9} & r^{22}s & r^{23} & r^{27} & r^{39} & r^{44}s & r^{45}s \\\hline
r^{38}s & r^{1}s & r^{6}s & r^{33}s & r^{25} & r^{29}s & r^{39} & r^{3} & r^{7} & r^{20}s\\ \hline\hline
\end{array}
$$
\caption{$SMS_{D_{50}}(10)$ with magic constant $\mu=r^{25}$}
\end{subfigure}

\caption{Example of $E(5)$, $O(5)$, $T(5)$, and $SMS_{D_{50}}(10)$}\label{fig:D(10)2}
\end{figure}

Thus, for a dihedral group $\Gamma$ of even order $n^2>4$, we can define the
\emph{spectrum of magic constants} by
\[
\Spec_{\Gamma}(n)
=
\left\{
\mu \in \Gamma \;:\;
\text{there exists an $SMS_{\Gamma}(n)$ with magic constant $\mu$}
\right\}.
\]

Consequently, we pose the following open problem: 

\begin{prb}\label{prb:spectrum}
	Characterize $\Spec_{\Gamma}(n)$, where $\Gamma$ is a dihedral group of order $n^2>4$.
\end{prb}

\section{Statements and declarations}

The work of the first author was    supported by program AGH University of Krakow under grant no. 16.16.420.054, funded by the Polish Ministry of Science and Higher Education. The work of the second author was partially supported by program ''Excellence initiative – research university'' for the AGH University.

\vskip1cm

\noindent

%\newpage

%\newpage

\begin{comment}

\bibitem{Cichacz-Hincz-1}
S. Cichacz, T. Hinc,
A note on magic rectangle set $MRS_{\Gamma}(2k+1,4;4l+2)$,
\emph{J. Combin. Des.} \textbf{29(7)} (2021), 502--507.

\end{comment}

%% \bibitem must have the following form:
%%   \bibitem{key}...
%%

% \bibitem{}

% \end{thebibliography}

\end{document}